\def\BState{\State\hskip-\ALG@thistlm}
\Crefname{equation}{}{}
\crefname{equation}{}{}
\theoremstyle{plain}
 \newtheorem{theorem}{Theorem}[section]
 \newtheorem{proposition}{Proposition} 
 \newtheorem{lemma}{Lemma}[section]
 \newtheorem{corollary}{Corollary}[section]
 \newtheorem{assumption}{Assumption}
\theoremstyle{definition}
 \newtheorem{definition}{Definition}
\theoremstyle{remark}
  \newtheorem{remark}{Remark}
\newcommand{\eg}{\textsl{e.g.}}
\newcommand{\ie}{\textrm{i.e.}}
\newcommand{\Real}{\mathbb{R}}
\newcommand{\domain}{\Omega} 
\newcommand{\Diff}{\mathcal{D}} 
\DeclareMathOperator*{\diag}{\texttt{diag}}
\newcommand{\bx}{\boldsymbol{x}}
\newcommand{\by}{\boldsymbol{y}}
\newcommand{\bdf}{\boldsymbol{f}}
\newcommand{\bdg}{\boldsymbol{g}}
\newcommand{\bdb}{\boldsymbol{b}}
\newcommand{\bdn}{\boldsymbol{n}}
\newcommand{\bds}{\boldsymbol{s}}
\newcommand{\bzero}{\boldsymbol{0}}
\newcommand{\tra}{\boldsymbol{\mathsf{T}}}
\newcommand{\bF}{\boldsymbol{F}}
\newcommand{\bH}{\boldsymbol{H}}
\newcommand{\bzeta}{\boldsymbol{\zeta}}
\begin{document}

\title{Solution existence, uniqueness, and stability of discrete basis sinograms in multispectral CT}
\author{
 Yu Gao\thanks{LSEC, ICMSEC, Academy of Mathematics and Systems Science, Chinese Academy of Sciences, Beijing 100190, China. University of Chinese Academy of Sciences, Beijing 100049, China.}
 \and
 Xiaochuan Pan\thanks{Department of Radiology, The University of Chicago, Chicago, IL 60637, USA.}
 \and
 Chong Chen\thanks{LSEC, ICMSEC, Academy of Mathematics and Systems Science, Chinese Academy of Sciences, Beijing 100190, China. University of Chinese Academy of Sciences, Beijing 100049, China.}
}

\maketitle

\begin{abstract}

This work investigates conditions for quantitative image reconstruction in multispectral computed tomography (MSCT), which remains a topic of active research. In MSCT, one seeks to obtain from data the spatial distribution of linear attenuation coefficient, referred to as a virtual monochromatic image (VMI), at a given X-ray energy, within the subject  imaged. As a VMI is decomposed often into a linear combination of basis images with known decomposition coefficients, the reconstruction of a VMI is thus tantamount to that of the basis images. An empirical, but highly effective, two-step data-domain-decomposition (DDD) method has been developed and used widely for quantitative image reconstruction in MSCT. In the two-step DDD method, step (1) estimates the so-called basis sinogram from data through solving a nonlinear transform, whereas step (2) reconstructs basis images from their basis sinograms estimated. Subsequently, a VMI can readily be obtained from the linear combination of basis images reconstructed. As step (2) involves the inversion of a straightforward linear system, step (1) is the key component of the DDD method  in which a nonlinear system needs to be inverted for estimating the basis sinograms from data. In this work, we consider a {\it discrete} form of the nonlinear system in step (1), and then carry out theoretical and numerical analyses of conditions on the existence, uniqueness, and stability of a solution to the discrete nonlinear system for accurately estimating the discrete basis sinograms, leading to quantitative reconstruction of VMIs in MSCT.
\end{abstract}

\section{Introduction}\label{sec:Introduction} 

The advancement of hardware and application in multispectral computed tomography (MSCT) has prompted an increased level of research interest in investigating image reconstruction in MSCT.  For a given X-ray energy, one seeks to determine in MSCT quantitatively the spatial distribution of linear attenuation coefficient (LAC), which is referred to also as a virtual monochromatic image (VMI), within the subject scanned. Because the VMI is decomposed often into a linear combination of basis images with known decomposition coefficients, the reconstruction of a VMI is tantamount to that of the basis images. Development of algorithms, including one-step and two-step algorithms, for reconstruction of basis images in MSCT constitutes an important topic of active research in the CT field. While one-step algorithms have been investigated in recent years for reconstructing basis images directly from data \cite{zhaozz14,LongFessler14,BarberPan16,chpan17,chpan21,gao2021EPD}, there remains of theoretical and practical interest in study of the two-step data-domain-decomposition (DDD) method because it is used widely for reconstruction of basis images in dual-energy CT (DECT), a special form of MSCT, and because the study may yield useful insights into the development of one-step algorithms.

In the two-step DDD method, step (1) estimates the so-called basis sinogram from data through solving a nonlinear transform, whereas step (2) reconstructs basis images from their basis sinograms estimated. Subsequently, a VMI can readily be obtained as a linear combination of basis images reconstructed. As step (2) involves the inversion of a straightforward linear system, step (1) constitutes the key component of the DDD method in which a nonlinear system needs to be inverted for estimating the basis sinograms from data. Therefore, it is theoretically and practically worthy to study the conditions on existence, uniqueness, and stability of the solution to the nonlinear system.

There exist works on investigating the conditions on the existence, uniqueness, and stability of the inversion of the nonlinear system in step (1) in a {\it continuous} form (see \cref{eq:cc_formula} below), which is referred to simply as the continuous nonlinear system hereinafter. Alvarez studied the invertibility in terms of the zero Jacobian determinant of the continuous nonlinear system in DECT \cite{Alvarez19Invert}.  For MSCT, Bal and Terzioglu \cite{Bal_2020} have performed recently an interesting analysis of the conditions on the existence, uniqueness, and stability of the solution to the continuous nonlinear system in \cref{eq:cc_formula}, and they also carried out numerical studies verifying that the mapping (i.e., the continuous nonlinear system) is injective in some closed rectangle. Ding et al.  provided a sufficient condition for the invertibility of the multi-energy X-ray transform also from the continuous viewpoint \cite{Ding21_invert}.

In practical MSCT, however, the nonlinear and linear systems involved must be in {\it discrete} forms as the basis sinograms, basis images, and data are in discrete forms (see \cref{eq:linear_part} and \cref{eq:nonlinear_systems_original} below), which are referred to, respectively, simply as discrete linear and nonlinear systems hereinafter. In this work, we carry out theoretical and numerical analyses of conditions on the existence, uniqueness, and stability of a solution to the {discrete} nonlinear system  (see \cref{eq:nonlinear_systems_original} below) for accurately estimating basis sinograms in MSCT, as there appears to be a lack of such analyses reported in literature.

The paper is organized as follows. Following the introduction above, we describe the data models in continuous and discrete forms in \cref{sec:Data-models} and mathematical preliminaries in \cref{sec:Preliminaries}. We then perform in \cref{sec:math_analy} theoretical analyses of the conditions on the existence, uniqueness, and stability of the solution to the discrete nonlinear system in \eqref{eq:nonlinear_systems_map}, followed by numerical studies in \cref{sec:Numerical} demonstrating the theoretical results. Finally, discussion and remarks are made in \cref{sec:discussion_conclusion}.

\section{Data models in multispectral CT}\label{sec:Data-models} 
\subsection{Continuous-to-continuous (CC)-data model}
In MSCT, data are measured from an object scanned for multiple spectra $s^{[q]}(E)$, where $q=1, 2, \ldots, Q$, and $Q$ denotes the total number of distinct spectra involved. 
For a given spectrum{\footnote{The effective spectrum of a CT system is indeed the product of the X-ray-tube spectrum and detector-energy response.}}, data can be measured along X-rays each of which is specified by $l(\hat{\theta}^{[q]})$, where $\hat{\theta}^{[q]}$ denotes the ray direction. Measured data in the absence of other physical factors can be modeled as 
\begin{equation}\label{eq:multimeasurement}
g_{l(\hat{\theta}^{[q]})} = \ln\int_{0}^{E_{\text{max}}} s^{[q]}(E)\exp\left(-\int_{l (\hat{\theta}^{[q]})} \mu(\by, E)\mathrm{d}l\right) \mathrm{d} E,
\end{equation}
where spectrum $s^{[q]}(E)$ is normalized over energy satisfying $\int_{0}^{E_{\text{max}}}s^{[q]}(E) \mathrm{d} E = 1$,
$\mu(\by, E)$ denotes the energy-dependent LAC 
of interest at spatial position $\by \in \domain \subset \mathbb{R}^d$ ($d=2$ or $3$), energy $0<E<E_{\text{max}}$, and $E_{\text{max}}$ the maximum energy in the scan. 
LAC $\mu(\by, E)$ is decomposed often as a linear combination 
\begin{equation}\label{eq:attenuationco}
\mu(\by, E) = \sum_{k=1}^K b_k(E) f_k(\by), 
\end{equation}
of basis image $f_k(\by)$ that is a function only of $\by$, where $b_k(E)$ denotes expansion coefficient, $k=1, 2, ..., K$, and $K$ the number of basis images. For example, for the typical X-ray-energy range in diagnostic DECT, $K=2$ is chosen often as photoelectric effect and Compton scatter are the dominant factors contributing to $\mu(\by, E)$; $b_k(E)$ denotes the known mass attenuation coefficient (MAC) of the $k$-th basis material (\eg, water and bone); and $f_k(\by)$ represents the material-equivalent density function (or basis image) of the $k$-th basis material \cite{alma76,hsieh15}. 

Substituting \cref{eq:attenuationco} 
into \cref{eq:multimeasurement}, we obtain 
\begin{equation}\label{eq:cc_formula}
g_{l(\hat{\theta}^{[q]})}= \ln\int_{0}^{E_{\text{max}}} s^{[q]}(E)\exp\left(-\sum_{k=1}^K b_k(E) x_{k}^{l(\hat{\theta}^{[q]})} \right)\mathrm{d} E,
\end{equation}
where 
\begin{equation}\label{eq:sinoCC_formula}
x_k^{l(\hat{\theta}^{[q]})}= \int_{l(\hat{\theta}^{[q]})}f_k(\by)\mathrm{d} l.
\end{equation}
We refer to \cref{eq:cc_formula} as the {\it continuous-to-continuous} (CC)-data model 
\cite{chpan17} and to $x_k^{l(\hat{\theta}^{[q]})}$ in \cref{eq:sinoCC_formula} as the continuous basis sinogram of continuous basis image $f_k(\by)$,  because $\hat{\theta}^{[q]}$, $E$, and $\by$ are continuous variables. 

Using knowledge of $b_k(E)$  and $f_k(\by)$ in \cref{eq:attenuationco}, $\mu(\by, E)$, also referred to as the VMI at given energy $E$, can readily be obtained. Therefore, in MSCT,  the task of reconstructing VMI $\mu(\by, E)$ is tantamount to the task of reconstructing basis images $ f_k(\by)$ for $k=1, ..., K$. 

For MSCT with a geometrically-consistent scan configuration, data are collected using identical scan geometry for all $Q$ spectra, i.e., $\hat{\theta}^{[q]}=\hat{\theta}$ is independent of $q$.

The two-step DDD method has been devised for image reconstruction in geometrically-consistent MSCT in which step (1) estimates $x_k^{l(\hat{\theta}^{[q]})}$ from knowledge of $g_{l(\hat{\theta}^{[q]})}$ through solving continuous nonlinear system \cref{eq:cc_formula}, whereas step (2) reconstructs $f_k(\by)$ from knowledge of $x_k^{l(\hat{\theta}^{[q]})}$ estimated through solving continuous linear system \cref{eq:attenuationco}. As step (2) involves solving a continuous linear system, step (1) that solves the continuous nonlinear system constitutes the key component of the DDD method. Works \cite{Bal_2020,Bal_2022,Ding21_invert} have been reported on investigating the conditions on the existence, uniqueness, and stability of a solution to the continuous nonlinear system in \cref{eq:cc_formula}.

\subsection{Discrete-to-discrete (DD)-data model}
\label{sec:dd_model}
In practical imaging, data can be acquired only for discrete rays and energies, and an image is reconstructed generally on an array of discrete voxels. Considering data and image arrays in practical imaging, one can devise a discrete-to-discrete (DD)-data model \cite{chpan17}, which is created often by applying a specific discrete scheme selected to the CC-data model in \cref{eq:cc_formula}. In particular, taking a discrete scheme in which energy range $[0, E_{\text{max}}]$ is divided into $M$ intervals of equal size $\Delta_E$, whereas the image space is represented by an array of size $I=I_x\times I_y \times I_z$, consisting of identical cubic voxels of size $\Delta_x=\Delta_y=\Delta_z$. Let $s^{[q]}_m=s(m \,\Delta_E)$ and $b_{km}=b_k(m\, \Delta_E)$, where $m=1, 2, \ldots, M$. 

We use vector $\bdf_k$ of size $I$ to denote discrete basis image $k$ in a concatenated form in the order of $x$, $y$, and $z$, with entry $f_{ki}$ depicting the value of discrete basis image $k$ at voxel $i$, where $i=1, 2, ..., I$. We also use vector ${\boldsymbol{\mu}}_m$ of size $I$ to denote the VMI at energy $m$ in a concatenated form in the order of $x$, $y$, and $z$, with entry $\mu_{m i}$ depicting the value of discrete VMI $m$ at voxel $i$. Observing  \cref{eq:attenuationco}, we can obtain a relationship between discrete VMI ${\boldsymbol{\mu}}_m$ and discrete basis images $\bdf_k$ as
\begin{equation}\label{eq:attenuationco-DD}
\mu_{m i} = \sum_{k=1}^K b_{km} f_{ki}.
\end{equation}

We assume that for given spectrum $s^{[q]}(E)$, data are acquired for a total of $J_q$ rays specified by a slew of discrete orientations $\hat{\theta}_j^{[q]}$, where $j=1, 2, \ldots, J_q$, and $g_j^{[q]}$ denotes the measurement for ray $j$. Subsequently, we can obtain a {DD-data} model as
\begin{equation}\label{eq:dd_formula}
g_j^{[q]}  = \ln\sum_{m=1}^{M} s^{[q]}_{m} \exp\left(-\sum_{k=1}^K b_{km} x^{[q]}_{j k}
\right) \quad \text{for $1 \le j \le J_q$,}
\end{equation}
where $\sum_{m=1}^{M} s^{[q]}_{m} \Delta_E =1$, and 
\begin{equation}\label{eq:linear_part}
x^{[q]}_{j k} = \sum_{i=1}^I a^{[q]}_{ji} f_{ki}
\end{equation}
is referred to as the discrete basis sinogram of discrete basis image $k$, and $a^{[q]}_{ji}$ denotes a weight applied to value $f_{ki}$ of discrete basis image $k$ at voxel $i$, and it is selected often as the intersection length of ray $j$ with voxel $i$ in the 
image array \cite{chen2020fast}. It is well-known \cite{EDEC_06,Zou08,JDE_MBIR14} that from 
knowledge of discrete basis sinograms $\{x^{[q]}_{jk}\}$, one can solve the discrete linear system in \eqref{eq:linear_part} to 
obtain discrete basis images $\bdf_k$ from which discrete VMI $\boldsymbol{\mu}_m$ can readily be obtained by use of \cref{eq:attenuationco-DD}.

In applications of diagnostic MSCT and DECT, geometrically-consistent scan configurations are used, and data are acquired with all $Q$  different spectra $s^{[q]}(E)$ for each of all of the rays, i.e., $a_{ji}^{[q]}$ and $J_q$ are identical for all spectra and thus are independent of spectra. We can then use $a_{ji}$ and $J$ to replace $a_{ji}^{[q]}$ and $J_q$ as they are independent of $q$, and rewrite the DD-data model as
\begin{equation}\label{eq:nonlinear_systems_original}
g_j^{[q]}  = \ln\sum_{m=1}^{M} s^{[q]}_{m} \exp\left(-\sum_{k=1}^K b_{km} x_{j k}\right) \quad \text{for $1 \le j \le J$.}
\end{equation}
%

For the $j$-th X-ray, using vectors $\bx_j$ and $\bdg_j$ to denote the corresponding basis 
sinogram and data along this ray, where 
$\bx_j = [x_{j1}, \ldots, x_{jK}]^{\tra}$ and $\bdg_j=[g^{[1]}_j,\ldots,g^{[Q]}_j]^{\tra}$.
We re-express \eqref{eq:nonlinear_systems_original} as
\begin{equation}\label{eq:nonlinear_systems_map}
\bF(\bx_j) = \bdg_j \quad \text{for $1 \le j \le J$,}
\end{equation}
where nonlinear mapping $\bF : \Real^K \rightarrow \Real^Q$ is given as
$\bF(\bx) = [F_1(\bx), \ldots, F_Q(\bx)]^{\tra}$, $\bx = [x_{1}, \ldots, x_{K}]^{\tra}$, and
\begin{align*}
    F_q(\bx) := \ln\sum_{m=1}^{M} s_{m}^{[q]} \exp\left(-\sum_{k=1}^K b_{km}x_{k}\right).
\end{align*}

\subsection{Existence, uniqueness and stability of solution to \cref{eq:nonlinear_systems_map}}

In the discrete two-step DDD method for MSCT with geometrically consistent scan configurations, step (1) estimates discrete basis sinogram $\mathbf{x}=[\bx_1, \ldots,\bx_J]^{\tra}$ from discrete data $\mathbf{g}= [\bdg_1, \ldots, \bdg_J]^{\tra}$ through solving the discrete nonlinear system in \cref{eq:nonlinear_systems_original} or, equivalently, in \cref{eq:nonlinear_systems_map}, and step 2 reconstructs discrete basis images  $\bdf_k$ from estimated $\mathbf{x}$ by solving discrete linear system \cref{eq:linear_part}. The challenge to the discrete two-step method also lies in step (1) as it solves a discrete nonlinear system, while step (2) solves a straightforward discrete linear system. 
In this work, we thus focus on investigating the conditions on the existence, uniqueness, and stability of solution $\bx$ to 
the discrete nonlinear system in \cref{eq:nonlinear_systems_map}. 

Mathematically, the existence means that for given measurement $\bdg$, there exists $\bx^{\ast}$ 
such that $\bF(\bx^{\ast}) = \bdg$, which corresponds to the surjection, whereas the uniqueness is equivalent to the injectivity of mapping $\bF$, 
namely, $\bF(\bx_1)=\bF(\bx_2)$ implies $\bx_1=\bx_2$. The solution stability, on the other hand, is characterized by $\| \bx_1 - \bx_2\| \le \gamma \| \bF(\bx_1) - \bF(\bx_2)\|$, where $\bF(\bx_1)$ and $\bF(\bx_2)$ denote two measurements, and $\gamma$ is a positive constant \cite{bal2012introduction}.

\section{Mathematical preliminaries}
\label{sec:Preliminaries}

We introduce mathematical preliminaries required.  Note that the nonlinear mapping 
in \cref{eq:nonlinear_systems_map} is defined on a finite-dimensional Euclidean space, which can be equipped with inner product $\langle \cdot,\cdot\rangle$ and Euclidean norm $\|\cdot\| = \sqrt{\langle \cdot,\cdot\rangle}.$ Let $\Diff \bH(\bx)$ denote the Jacobian matrix of mapping $\bH$ at $\bx$, $\det(A)$ the determinant of square matrix $A$, and $\diag(\bx)$ the diagonal matrix with diagonal $\bx$, respectively.

We define index set $\langle M \rangle: = \{1,2,\ldots,M\}$ and complementary set $\alpha^c := \langle M \rangle \setminus  \alpha$, where $\alpha$ is a subset of $\langle M \rangle$. We also use $\#\alpha$ to denote the number of elements in set $\alpha$.  For matrix $A$, which can be a non-square matrix, we use $A[\alpha,\beta]$ to denote the submatrix with rows and columns indexed by index sets $\alpha$ and $\beta$, respectively, and abbreviate $A[\alpha,\alpha]$ as $A[\alpha]$. In particular, for square matrix $A$, we refer to $A[\alpha]$ as a principal submatrix of $A$. The submatrix's determinant,  $\det(A[\alpha,\beta])$ with $\#\alpha = \#\beta$, is referred as a minor of $A$, whereas  $\det(A[\alpha])$ is referred as a principal minor of $A$. 


\begin{definition}\cite{johnson_smith_2020}\label{def:P-matrix}
    If $A$ is a square matrix, $A$ is referred to as a P-matrix if every principal minor of $A$ is positive, or as a weak P-matrix if $\det(A)>0$ and if every other principal minor is non-negative.
\end{definition}

\begin{definition}\cite{Parthasarathy1983}\label{def:homeo}
Mapping $\bH: \mathcal{X} \rightarrow \mathcal{Y}$ between two topological spaces is a homeomorphism if it has the following properties: 
\begin{itemize}
    \item[(i)] $\bH$ is a bijection, namely injection and surjection (one-to-one and onto),
    \item[(ii)] $\bH$ its inverse function $\bH^{-1}$ are continuous.
\end{itemize}
\end{definition}

\begin{definition}\cite{Parthasarathy1983}\label{def:local_homeo}
Mapping $\bH : \mathcal{X} \rightarrow \mathcal{Y}$ is referred to as a local homeomorphism if for each $\bx \in \mathcal{X}$, a neighborhood of $\bx$ is mapped homeomorphically by $\bH$ onto a neighborhood of $\bH(\bx)$.
\end{definition}
\begin{remark}
    It is clear that if $\bH : \Real^n \rightarrow \Real^n$ is a continuously differentiable mapping with non-vanishing $\det(\Diff \bH(\bx))$, it follows from the inverse function theorem that $\bH$ is a local homeomorphism.
\end{remark}

When continuous mapping $\bH : \Real^n \rightarrow \Real^n$ satisfies 
\begin{align*}
    \|\bH(\bx)\| \longrightarrow \infty \quad\text{as}\quad \|\bx\| \longrightarrow \infty,
\end{align*}
we refer to it as a proper mapping \cite{Parthasarathy1983}.

\begin{proposition}\cite{Parthasarathy1983}\label{thm:Hadamard}
    Mapping $\bH: \mathbb{R}^{n} \rightarrow \mathbb{R}^{n}$ is a homeomorphism if and only if $\bH$ is a proper mapping and a local homeomorphism.
\end{proposition}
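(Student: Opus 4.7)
The plan is to prove the two implications separately, with necessity being essentially routine and sufficiency containing the real content.

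For necessity, assume $\bH$ is a homeomorphism. Local homeomorphism is immediate from the definition, since the restriction of $\bH$ to any open neighborhood of $\bx$ is a bijection onto an open image with continuous inverse. To see $\bH$ is proper, I would argue by contradiction: if some sequence satisfies $\|\bx_n\|\to\infty$ while $\|\bH(\bx_n)\|$ stays bounded, extract a subsequence with $\bH(\bx_n)\to \by$; then continuity of $\bH^{-1}$ forces $\bx_n = \bH^{-1}(\bH(\bx_n)) \to \bH^{-1}(\by)$, contradicting $\|\bx_n\|\to\infty$.

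For sufficiency, assume $\bH$ is a proper local homeomorphism, and first establish surjectivity by a clopen argument. The image $\bH(\Real^n)$ is open because any local homeomorphism is an open map. To show it is closed, take $\bH(\bx_n)\to\by$; since $\{\bH(\bx_n)\}$ is bounded, properness forces $\{\bx_n\}$ to be bounded, so by Bolzano--Weierstrass some subsequence converges to a point $\bx$ with $\bH(\bx)=\by$. Hence $\bH(\Real^n)$ is a nonempty clopen subset of the connected space $\Real^n$, so it equals $\Real^n$.

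The hard part is injectivity. The plan is to upgrade $\bH$ to a covering map and then invoke the simple connectedness of $\Real^n$. For any $\by$, the fiber $\bH^{-1}(\by)$ is discrete by the local homeomorphism property and compact by properness, hence finite, say $\{\bx_1,\ldots,\bx_k\}$; I then choose pairwise disjoint neighborhoods $U_i$ of $\bx_i$ on which $\bH$ restricts to a homeomorphism onto an open set $V_i\ni\by$, and set $V=\bigcap_i V_i$. The crucial step, and the only genuinely subtle point in the whole proof, is to shrink $V$ further so that $\bH^{-1}(V)\subset\bigsqcup_i U_i$; if this failed, there would be a sequence $\bz_n\notin\bigsqcup_i U_i$ with $\bH(\bz_n)\to\by$, and properness together with Bolzano--Weierstrass would extract a limit in $\bH^{-1}(\by)$ disjoint from the $U_i$, a contradiction. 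This exhibits a suitable $V$ as an evenly covered neighborhood, so $\bH$ is a covering map. The sheet count $k$ is therefore locally constant, hence constant on the connected base $\Real^n$; simple connectedness of $\Real^n$ then forces $k=1$, giving injectivity. Continuity of the global inverse is inherited pointwise from the local homeomorphism property.
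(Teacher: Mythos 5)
Your argument is correct. Note that the paper itself gives no proof of this proposition: it is imported verbatim from Parthasarathy's monograph as a known global-homeomorphism (Hadamard--Levy type) theorem, so there is no in-paper argument to compare against; your proposal supplies a self-contained proof along the standard lines found in that literature. The decomposition you use is exactly the classical one: necessity is routine (properness from continuity of $\bH^{-1}$ via a subsequence extraction), and for sufficiency you show the image is clopen in the connected space $\Real^n$ to get surjectivity, then upgrade the proper local homeomorphism to a covering map and invoke simple connectedness to force one sheet. The covering-map step is handled correctly: the fiber over $\by$ is discrete (local homeomorphism) and compact (properness, in the equivalent form that preimages of bounded sets are bounded), hence finite, and your shrinking argument for an evenly covered neighborhood --- extracting a convergent subsequence from $\bz_n\notin\bigsqcup_i U_i$ with $\bH(\bz_n)\to\by$ and landing in the fiber but outside the open set $\bigsqcup_i U_i$ --- is exactly the right contradiction. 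Two points are used implicitly and are worth making explicit if this were written out in full: the conclusion $k=1$ needs not only that $\Real^n$ is simply connected and locally path-connected (so every covering of it is trivial) but also that the total space $\Real^n$ is connected, which is what rules out a disjoint union of several sheets; and the passage from the paper's definition of properness ($\|\bH(\bx)\|\to\infty$ as $\|\bx\|\to\infty$) to ``preimages of compact sets are compact'' deserves one line, though it is immediate by the subsequence argument you already use. With those remarks, the proof is complete and fully compatible with the statement as used in the paper.
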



\begin{proposition}\cite{Parthasarathy1983}\label{lem:parth_injective}
Let $\bH: \mathbb{R}^n \rightarrow \mathbb{R}^n$ be continuously differentiable and $\Omega$ be a bounded rectangle in $\mathbb{R}^n$. Suppose that  $\det(\Diff \bH(\bx))>0$ for all $ \bx \in\mathbb{R}^n$ and further suppose $\Diff \bH(\bx)$ is a weak P-matrix for all $\bx \in \mathbb{R}^n \backslash \Omega$. Then $F$ is one-to-one (injective).
\end{proposition}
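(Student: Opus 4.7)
The plan is to prove injectivity of $\bH$ by contradiction, leveraging a strengthened form of the classical Gale--Nikaido theorem and the local homeomorphism structure provided by $\det(\Diff \bH(\bx)) > 0$. Suppose there exist distinct $\bx_1, \bx_2 \in \Real^n$ with $\bH(\bx_1) = \bH(\bx_2)$. By the Remark above Proposition~\ref{thm:Hadamard}, $\bH$ is then an orientation-preserving local homeomorphism everywhere, so $\bH^{-1}(\bH(\bx_1))$ is a discrete subset of $\Real^n$; the task is to exploit the weak-P-matrix hypothesis on $\Real^n \setminus \Omega$ to force $\bx_1 = \bx_2$.

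The central tool is the following refinement of Gale--Nikaido, which I would first establish on an open rectangle: a $C^1$ map from an open rectangle $U \subset \Real^n$ into $\Real^n$ whose Jacobian is a weak P-matrix with positive determinant throughout $U$ is injective on $U$. A natural route is to pass from weak to strict P-matrix via the perturbation $\bH_{\varepsilon}(\bx) := \bH(\bx) + \varepsilon \bx$, whose Jacobian $\Diff\bH + \varepsilon I$ satisfies, for each index set $\alpha$, $\det(A[\alpha] + \varepsilon I_{|\alpha|}) = \sum_{k=0}^{|\alpha|} e_{|\alpha|-k}(A[\alpha])\,\varepsilon^{k}$, where each elementary symmetric function $e_{j}$ in the eigenvalues of $A[\alpha]$ equals a sum of $j \times j$ principal minors of $A[\alpha]$ and hence is non-negative under the weak-P-matrix hypothesis. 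With leading coefficient $1$, this polynomial is strictly positive for every $\varepsilon > 0$, so $\Diff\bH_{\varepsilon}$ is a strict P-matrix on $U$, and the classical Gale--Nikaido theorem yields injectivity of $\bH_{\varepsilon}|_{U}$. Transferring injectivity from $\bH_{\varepsilon}$ to $\bH$ itself is not automatic -- a naive limit $\varepsilon \downarrow 0$ fails -- and I expect to complete this step either by a Brouwer-degree continuation argument or by adapting the Gale--Nikaido line-of-argument directly to the weak-P-matrix setting.

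The main obstacle is coupling this rectangular-injectivity lemma with the global hypothesis, because $\Real^n \setminus \Omega$ is not itself a rectangle. My plan is to enclose $\Omega$ in a large rectangle $R$ and decompose $R \setminus \Omega$ into finitely many rectangular slabs adjacent to the faces of $\Omega$, on each of which the refined lemma applies; patching these slabs using the global local-homeomorphism property then yields injectivity of $\bH$ on $\Real^n \setminus \Omega$. Finally, to propagate injectivity across $\Omega$ itself -- where only $\det(\Diff\bH) > 0$ is available, with no principal-minor information -- I would use a degree-theoretic argument: on a sufficiently large ball $B \supset \Omega$, the exterior injectivity already pins down the Brouwer degree of $\bH|_{B}$ at the value $\bH(\bx_1)$, so an additional preimage inside $\Omega$ would contradict the degree count, forcing $\bx_1 = \bx_2$. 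This last step, in which injectivity is pushed from the weak-P-matrix region into the interior of $\Omega$ purely via orientation-preservation, is the principal obstacle and is precisely where the boundedness and rectangular shape of $\Omega$ are indispensable.
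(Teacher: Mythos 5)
This proposition is not proved in the paper at all: it is imported verbatim, with citation, from Parthasarathy's monograph (it is essentially the Garcia--Zangwill univalence theorem), so your argument has to stand on its own, and as written it is a programme whose decisive steps are left open. The first gap is one you concede yourself: the perturbation $\bH_{\varepsilon}(\bx)=\bH(\bx)+\varepsilon\bx$ does make $\Diff\bH_{\varepsilon}$ a P-matrix, and classical Gale--Nikaido then gives injectivity of $\bH_{\varepsilon}$ on a rectangle, but this says nothing about $\bH$: if $\bH(\bx_1)=\bH(\bx_2)$ with $\bx_1\neq\bx_2$, then $\bH_{\varepsilon}(\bx_1)-\bH_{\varepsilon}(\bx_2)=\varepsilon(\bx_1-\bx_2)\neq 0$ for every $\varepsilon>0$, so no contradiction arises for any fixed $\varepsilon$ and no limiting argument is supplied. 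The weak-P-matrix version of Gale--Nikaido on a rectangle is a genuine theorem in its own right, not a corollary of the strict version via this perturbation, and your proof of the proposition rests on it without establishing it.

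The second, and in my view fatal, gap is the patching step: it invokes a false principle. An orientation-preserving local diffeomorphism that is injective on each member of a finite cover by rectangles or slabs need not be injective on the union; for example, $z\mapsto z^{2}$ on an annulus around the origin in $\Real^{2}$ has everywhere positive Jacobian determinant and is injective on each of the two closed half-annuli, yet is two-to-one on their union. So injectivity on the slabs adjacent to the faces of $\Omega$, combined with the local homeomorphism property, does not yield injectivity of $\bH$ on $\Real^{n}\setminus\Omega$; the weak-P hypothesis holds globally on $\Real^{n}\setminus\Omega$, but your argument never exploits it across different slabs, which is exactly where the content of the theorem lies. The final degree-theoretic step is incomplete but repairable in principle: granted injectivity on $\Real^{n}\setminus\Omega$, one would take a large ball $B\supset\Omega$, note $\partial B\subset\Real^{n}\setminus\Omega$ so $\bH$ is injective on $\partial B$, use Jordan--Brouwer separation to see that $\deg(\bH,B,\by)$ is $\pm1$ on the bounded component of $\Real^{n}\setminus\bH(\partial B)$ and $0$ on the unbounded one, and compare with the fact that every preimage contributes local index $+1$; merely asserting that exterior injectivity ``pins down'' the degree does not do this. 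As it stands, then, the proposal identifies a plausible strategy but does not constitute a proof of the cited result.
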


\begin{proposition}\cite{Gale1965}\label{lem:gale_R2_injective}
    Let $\Omega\subset \Real^{2}$ be an arbitrary rectangle either closed or not closed and $\bH: \Omega \rightarrow \Real^{2}$. Suppose that $\bH$ has continuous partial derivatives and none of the principal minors of $\Diff \bH(\bx)$ 
    vanishes for all $\bx\in\Omega$. Then $\bH$ is injective in $\Omega$.
\end{proposition}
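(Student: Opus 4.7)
My plan is a proof by contradiction, preceded by a partial sign normalization. Since the rectangle $\Omega$ is connected and each of the three principal minors of $\Diff \bH$ — namely $\partial H_1/\partial x_1$, $\partial H_2/\partial x_2$, and $\det(\Diff \bH)$ — is continuous and never zero on $\Omega$, each has a constant sign on $\Omega$. By replacing $H_i$ with $-H_i$ if necessary (which preserves injectivity), I may assume without loss of generality that both diagonal entries $\partial H_i/\partial x_i$ are strictly positive throughout $\Omega$; the determinant $\det(\Diff \bH)$ then also has some definite sign, either strictly positive or strictly negative, on all of $\Omega$.

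Suppose for contradiction that $\bH(a)=\bH(b)$ with $a\neq b$. I first dispatch the degenerate cases: if $a_1=b_1$, then $a_2\neq b_2$ and the segment $\{a_1\}\times[a_2,b_2]$ lies in $\Omega$, so the function $x_2\mapsto H_2(a_1,x_2)$ has strictly positive derivative along it, hence is strictly increasing, contradicting $H_2(a)=H_2(b)$; the case $a_2=b_2$ is symmetric. So $a_1\neq b_1$ and $a_2\neq b_2$, and after relabeling I may take $a_1<b_1$. Since $\partial H_1/\partial x_1>0$, for each fixed $x_2$ the map $x_1\mapsto H_1(x_1,x_2)$ is strictly increasing, so the level set $L:=\{\bx\in\Omega:H_1(\bx)=H_1(a)\}$ meets each horizontal line in at most one point and, by the implicit function theorem, admits near every point a $C^1$ parametrization $x_1=\phi(x_2)$ with $\phi'(x_2)=-\partial_{x_2}H_1/\partial_{x_1}H_1$. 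Differentiating $H_2$ along such a graph then gives
\begin{equation*}
\frac{d}{dx_2}H_2\bigl(\phi(x_2),x_2\bigr)=\partial_{x_2}H_2-\frac{\partial_{x_1}H_2\cdot\partial_{x_2}H_1}{\partial_{x_1}H_1}=\frac{\det(\Diff \bH)}{\partial_{x_1}H_1},
\end{equation*}
which is nonzero with a constant sign on $\Omega$; hence $H_2$ is strictly monotone along every connected arc of $L$ parametrized by $x_2$.

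Both $a$ and $b$ lie on $L$. If they belong to a common arc — equivalently, if $\phi(x_2)$ is well-defined and stays inside $\Omega$ for every $x_2$ between $a_2$ and $b_2$ — then the strict monotonicity above yields $H_2(a)\neq H_2(b)$, contradicting $\bH(a)=\bH(b)$, and the proof is finished. I expect the main obstacle to lie precisely here: ensuring that the level curve does not escape $\Omega$ before reaching $b$. Concretely, for each $x_2$ strictly between $a_2$ and $b_2$, one must verify that $H_1(a)$ lies in the range of the restricted map $H_1(\cdot,x_2)$ on the corresponding $x_1$-slice of $\Omega$. The rectangular geometry of $\Omega$ is essential for this, since all horizontal slices share the same $x_1$-interval; a continuity-and-compactness analysis of how these ranges vary with $x_2$, combined with a boundary argument when $\Omega$ is not closed, should close the gap. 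This is the technical heart of Gale's two-dimensional theorem, and if this direct level-curve tracing runs into difficulty near the boundary of $\Omega$, I would fall back on topological-degree or invariance-of-domain machinery applied to a closed sub-rectangle containing $a$ and $b$ to sidestep the escape issue.
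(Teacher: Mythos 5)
Your setup is sound as far as it goes: the sign normalization (flipping $H_i\mapsto -H_i$ to make both diagonal entries of $\Diff\bH$ positive), the dismissal of the cases $a_1=b_1$ or $a_2=b_2$ by monotonicity along segments of the rectangle, the observation that $\partial_{x_1}H_1>0$ forces at most one point of the level set $\{H_1=H_1(a)\}$ on each horizontal line, and the computation that $H_2$ varies along the level curve with derivative $\det(\Diff\bH)/\partial_{x_1}H_1$ of constant sign are all correct. But the proof is not complete, and the missing step is not a technicality: the entire content of the two-dimensional Gale--Nikaid\^o theorem sits exactly where you defer, namely in showing that the level curve of $H_1$ through $a$ can be continued inside $\Omega$ for every $x_2$ between $a_2$ and $b_2$, or in deriving a contradiction when it cannot. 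Nothing you have written rules out the escape scenario: positivity of $\partial_{x_1}H_1$ guarantees at most one solution of $H_1=H_1(a)$ on each slice, not that a solution exists on intermediate slices, and since $\Omega$ may be open or unbounded the curve may also approach a missing edge or run off to infinity without attaining a limit. Excluding (or exploiting) this requires a further argument that uses the sign hypotheses along the edges of the sub-rectangle spanned by $a$ and $b$ — this boundary analysis is the substance of the cited 1965 proof — and it cannot be waved through with ``continuity and compactness.''

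The proposed fallback does not close the gap either. Invariance of domain and the nonvanishing Jacobian only give that $\bH$ is locally injective and open, which is strictly weaker than injectivity on $\Omega$; a degree-theoretic argument on a closed sub-rectangle would need precisely the boundary control that the escape analysis was supposed to provide. Note also that after your normalization the hypotheses still allow $\det(\Diff\bH)<0$ with positive diagonal entries, so $\Diff\bH$ need not be a P-matrix and the general $n$-dimensional Gale--Nikaid\^o univalence theorem (\cref{lem:parth_injective}-type results) cannot be invoked as a shortcut; the $Q=K=2$ statement is genuinely stronger, which is why the paper quotes it from Gale's reference without proof. As it stands, your argument proves injectivity only under the additional assumption that $a$ and $b$ lie on a common arc of the level curve, and the remaining case is the theorem's actual difficulty.
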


\begin{proposition}\cite{guillemin2010differential}\label{lem:general_IFT}
    Let $\bH: U \rightarrow V$ be a mapping between open subsets of $\mathbb{R}^n$ and $\mathbb{R}^m$. Assume $\bH$ is continuously differentiable. If $\bH$ is injective on a closed subset $\Omega \subset U$ and if the Jacobian matrix of $\bH$ is invertible at each point of $\Omega$, then $\bH$ is injective in a neighborhood $\Omega^{\prime}$ of $\Omega$ and $\bH^{-1}: \bH\left(\Omega^{\prime}\right) \rightarrow \Omega^{\prime}$ is continuously differentiable.
    \end{proposition}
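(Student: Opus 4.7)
The plan is to prove this via a compactness-based contradiction argument that bootstraps the classical inverse function theorem to a neighborhood of $\Omega$. Since $\Diff \bH(\bx)$ is invertible on $\Omega$, we must have $n=m$, and by continuity of $\bx \mapsto \det(\Diff \bH(\bx))$ this invertibility extends to some open neighborhood $W\supset\Omega$ in $U$. For each $\bx\in\Omega$ the classical inverse function theorem supplies an open neighborhood $V_\bx\subset W$ on which $\bH$ is a $C^1$-diffeomorphism onto an open subset of $\Real^n$.

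The main step is to upgrade these local injectivities to joint injectivity on a single neighborhood $\Omega'$ of $\Omega$. I would argue by contradiction: suppose no such $\Omega'$ exists. Then for every $n$ one can produce points $\bx_n\ne \by_n$ with $\bH(\bx_n)=\bH(\by_n)$ and $\mathrm{dist}(\bx_n,\Omega), \mathrm{dist}(\by_n,\Omega)\to 0$. Using compactness of $\Omega$ (which is implicit here; in the paper's applications $\Omega$ is a closed bounded rectangle), pass to subsequences so that $\bx_n\to \bx^*\in\Omega$ and $\by_n\to \by^*\in\Omega$. Continuity yields $\bH(\bx^*)=\bH(\by^*)$, and the hypothesized injectivity of $\bH$ on $\Omega$ forces $\bx^*=\by^*$. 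But then for all sufficiently large $n$ both $\bx_n$ and $\by_n$ lie in the local diffeomorphism neighborhood $V_{\bx^*}$, which contradicts $\bx_n\ne \by_n$ together with $\bH(\bx_n)=\bH(\by_n)$. Hence an open neighborhood $\Omega'\subset W$ of $\Omega$ on which $\bH$ is injective must exist.

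Once such an $\Omega'$ is secured, $\bH|_{\Omega'}$ is an injective $C^1$-map with everywhere invertible Jacobian, hence is an open map onto the open set $\bH(\Omega')\subset\Real^n$ and a bijection between $\Omega'$ and $\bH(\Omega')$. Applying the classical inverse function theorem at each point of $\Omega'$ then gives that $\bH^{-1}:\bH(\Omega')\to\Omega'$ is continuously differentiable, with $\Diff \bH^{-1}(\bdg) = (\Diff \bH(\bH^{-1}(\bdg)))^{-1}$.

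The main obstacle is the second step: without compactness of $\Omega$ the extraction of convergent subsequences fails, and one can construct closed non-compact counterexamples. So the crux is recognizing the implicit compactness hypothesis (harmless for this paper, where $\Omega$ is a closed bounded rectangle in $\Real^K$) and then carefully executing the sequential compactness argument that pins the two contradicting sequences to the same limit point, where the local inverse function theorem closes the contradiction.
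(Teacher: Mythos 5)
Your argument is correct, and it is essentially the standard proof of this result: the paper itself offers no proof (it simply cites Guillemin--Pollack), and the proof in that reference is exactly your compactness-based contradiction argument, so there is nothing methodologically different to compare. Your one substantive observation is also right and worth stating explicitly: as written, with $\Omega$ merely \emph{closed}, the proposition is false --- for instance, $\bH(x,y)=(e^{x}\cos y,\,e^{x}\sin y)$ is a local diffeomorphism of $\Real^{2}$ that is injective on the closed unbounded set $\Omega=\{y=0\}\cup\{y=2\pi-e^{-|x|}\}$, yet every neighborhood of $\Omega$ contains pairs $(x,0)$ and $(x,2\pi)$ with equal images, so no injective neighborhood exists. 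Compactness is what makes your subsequence extraction work (note also that the limits $\bx^{*},\by^{*}$ land in $\Omega$ because you can compare $\bx_{n},\by_{n}$ with nearest points of the compact set $\Omega$, and the shrinking neighborhoods $\{\bx:\operatorname{dist}(\bx,\Omega)<1/n\}$ are eventually contained in the open set $W\subset U$ where $\det(\Diff\bH)\neq 0$). This restriction is harmless for the paper, since in \cref{thm:stability_general} the set $\Omega$ is a closed \emph{bounded} region; the final step, deducing that $\bH^{-1}$ is $C^{1}$ on $\bH(\Omega')$ by matching it with the local inverses furnished by the inverse function theorem, is also correct.
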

 

\begin{proposition}\label{lem:cauchy_binet}\cite[Cauchy--Binet theorem]{BRUALDI1983}
    Let $A$ and $B$ be $K\times M$ matrices, where $K\le M$, then
\begin{align}
\det (AB^{\tra}) =& \sum_{\#\beta = K} \det(A[\langle K \rangle,\beta]) \det(B[\langle K \rangle,\beta]).
\end{align}
\end{proposition}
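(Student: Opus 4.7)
The plan is to proceed via the Leibniz formula for the determinant, which gives the most direct and transparent path. I would first write $\det(AB^\tra) = \sum_{\sigma\in S_K} \mathrm{sgn}(\sigma)\prod_{i=1}^K (AB^\tra)_{i,\sigma(i)}$ and then substitute $(AB^\tra)_{i,\sigma(i)} = \sum_{m=1}^{M} A_{im}B_{\sigma(i),m}$. After expanding the product and interchanging the order of summation, the expression becomes a double sum indexed by tuples $(m_1,\ldots,m_K)\in\langle M\rangle^K$ on the outside and permutations $\sigma\in S_K$ on the inside, with summand $\mathrm{sgn}(\sigma)\prod_i A_{i,m_i}B_{\sigma(i),m_i}$.

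The next step is to discard the tuples in which some coordinate repeats. The key observation is that if $m_i = m_{i'}$ for some $i\ne i'$, then pairing $\sigma$ with $\sigma\circ(i\,i')$ produces two summands with identical products of $A$- and $B$-entries but opposite signs, so they cancel in pairs. What remains is a sum over injective tuples, whose images are precisely the $K$-subsets $\beta\subset\langle M\rangle$ with $\#\beta = K$.

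I would then group the injective tuples by their image $\beta = \{b_1<\cdots<b_K\}$, parameterizing each such tuple by the permutation $\rho\in S_K$ determined by $m_i = b_{\rho(i)}$. Performing the change of variable $\tau = \sigma\circ\rho^{-1}$ and using $\mathrm{sgn}(\sigma) = \mathrm{sgn}(\tau)\,\mathrm{sgn}(\rho)$, the double sum over $(\rho,\sigma)$ separates into a product of two independent Leibniz expansions: the $\rho$-sum yields $\det(A[\langle K\rangle,\beta])$ and the $\tau$-sum yields $\det(B[\langle K\rangle,\beta])$. Summing over all $K$-subsets $\beta$ then gives the claimed identity.

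The main obstacle is purely bookkeeping: carefully tracking the sign decomposition when the auxiliary permutation $\rho$ is introduced, and verifying that both resulting factors are precisely the Leibniz formula for the submatrix determinants with rows indexed by $\langle K\rangle$ and columns indexed by the ordered set $\beta$. No nontrivial analysis is required beyond this combinatorial rearrangement, which is presumably why the statement is cited rather than proved in the text.
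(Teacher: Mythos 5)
Your proof is correct and complete: the Leibniz expansion, the cancellation of tuples with repeated indices via the involution $\sigma\mapsto\sigma\circ(i\,i')$, and the factorization over each $K$-subset $\beta$ through the substitution $\tau=\sigma\circ\rho^{-1}$ with $\mathrm{sgn}(\sigma)=\mathrm{sgn}(\tau)\,\mathrm{sgn}(\rho)$ together constitute the classical proof of the Cauchy--Binet theorem. The paper itself offers no proof --- it states the proposition with a citation to the literature --- so there is nothing in the text to compare against; you have simply supplied the standard argument that the cited reference contains, and your instinct that only combinatorial bookkeeping (and no further analysis) is needed is exactly right.
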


\section{Mathematical analysis}\label{sec:math_analy}

In this section, we analyze the existence, uniqueness, and stability of the solution to nonlinear system \cref{eq:nonlinear_systems_map}. The Jacobian matrix of mapping $\bF$ can be calculated that is given by
\begin{align}\label{eq:jacobian_matrix}
\Diff \bF(\bx) = 
    -\Lambda(\bx)G(\bx), 
    \end{align}
where  
\begin{align*}
\Lambda(\bx) &= \diag\left(\frac{1}{\sum_{m=1}^{M} s_{m}^{[1]} \zeta_{m}(\bx)}, \ldots, \frac{1}{\sum_{m=1}^{M} s_{m}^{[Q]} \zeta_{m}(\bx)}\right),\\[2mm]
G(\bx) &= \begin{bmatrix}
        \sum_{m=1}^M s^{[1]}_m \zeta_{m}(\bx) b_{1m} & \cdots & \sum_{m=1}^M s^{[1]}_m \zeta_{m}(\bx) b_{Km}\\
        \vdots & \ddots &\vdots \\
        \sum_{m=1}^M s^{[Q]}_m \zeta_{m}(\bx) b_{1m} & \cdots & \sum_{m=1}^M s^{[Q]}_m \zeta_{m}(\bx) b_{Km}
    \end{bmatrix}, \\[2mm]
\zeta_{m}(\bx) &=\exp\left(-\sum_{k=1}^K b_{km} x_{k}\right).
\end{align*} 
We define
\begin{align*}
    S = [\bds^{[1]}, \ldots, \bds^{[Q]}]^{\tra} \in \Real^{Q\times M}, \quad
    B = [\bdb^{[1]}, \ldots, \bdb^{[K]}]^{\tra} \in \Real^{K\times M}, 
\end{align*}
where $\bds^{[q]} = [s^{[q]}_1, \ldots, s^{[q]}_M]^{\tra} \in \Real^M$, and $\bdb^{[k]} = [b_{k1}, \ldots, b_{kM}]^{\tra} \in \Real^M$ denote the $q$-th row of matrix $S$, and the $k$-th row of matrix $B$, respectively. 
Subsequently, we can rewrite   
\begin{align*}
G(\bx) 
& = S\diag(\bzeta(\bx))B^{\tra}, 
\end{align*} 
where $\bzeta(\bx) = [\zeta_{1}(\bx), \ldots, \zeta_{M}(\bx)]^{\tra}$. 

We also make the following assumption, which appears appropriate for practical conditions considered in the work:
\begin{assumption}\label{ass:s_b}
   $S\ge 0$ and $B>0$, namely, $S$ is a non-negative matrix, and $B$ is a positive matrix. 
    Assume further that all columns of $S$ are non-zero vectors and 
\[
\sum_{m=1}^M s_m^{[q]} = 1 \quad \text{for}\ 1\le q \le Q, 
\]
and $Q = K\le M$.
\end{assumption}

\subsection{Existence}
The existence of a solution for nonlinear system \cref{eq:nonlinear_systems_map} is equivalent to that for data $\bdg\in\Real^Q$, there is solution $\bx$ such that $\bF(\bx)=\bdg$. Namely, $\bF$ is a surjection in $\Real^Q$. The surjectivity is proven below by means of that $\bF$ is shown to be a homeomorphism. 

Firstly, we calculate the principal minor of $G(\bx)$. 
\begin{lemma}\label{lem:Jacobian_det} 
The principal minor of $G(\bx)$ in \cref{eq:jacobian_matrix} is given by
\begin{align}\label{eq:det_jacobian}
\det\bigl(G(\bx)[\alpha]\bigr) 
    = \sum_{\#\beta=\#\alpha} \left(\prod_{i\in\beta}\zeta_{i}(\bx)\right)
    \det(S[\alpha,\beta]) 
    \det(B[\alpha,\beta]). 
    \end{align}
\end{lemma}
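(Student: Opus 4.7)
The plan is to apply the Cauchy--Binet theorem (Proposition~\ref{lem:cauchy_binet}) to the already-factored form $G(\bx) = S\,\diag(\bzeta(\bx))\,B^{\tra}$. First I would note that extracting the principal submatrix indexed by $\alpha \subset \langle K \rangle$ amounts to restricting $S$ to its rows in $\alpha$ and $B^{\tra}$ to its columns in $\alpha$, which gives $G(\bx)[\alpha] = S[\alpha,\langle M\rangle]\,\diag(\bzeta(\bx))\,B[\alpha,\langle M\rangle]^{\tra}$.

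Next I would absorb the diagonal matrix into the left factor by setting $\tilde S(\bx) := S[\alpha,\langle M\rangle]\,\diag(\bzeta(\bx))$, so that $G(\bx)[\alpha] = \tilde S(\bx)\,B[\alpha,\langle M\rangle]^{\tra}$ is the product of a $(\#\alpha)\times M$ matrix with the transpose of another $(\#\alpha)\times M$ matrix. Cauchy--Binet, after the harmless relabelling of the $\#\alpha$ rows by $\langle \#\alpha\rangle$ required to match the statement in Proposition~\ref{lem:cauchy_binet}, then expands the determinant as a sum over column index sets $\beta \subset \langle M\rangle$ of size $\#\alpha$, giving
\[
\det\bigl(G(\bx)[\alpha]\bigr) = \sum_{\#\beta = \#\alpha} \det\bigl(\tilde S(\bx)[\alpha,\beta]\bigr)\,\det\bigl(B[\alpha,\beta]\bigr).
\]

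Finally I would pull the diagonal scalars out of the first factor column by column. Since $\tilde S(\bx)$ differs from $S[\alpha,\langle M\rangle]$ only by multiplying column $m$ by $\zeta_m(\bx)$, multilinearity of the determinant in its columns yields $\det\bigl(\tilde S(\bx)[\alpha,\beta]\bigr) = \bigl(\prod_{i\in\beta}\zeta_i(\bx)\bigr)\det\bigl(S[\alpha,\beta]\bigr)$; substituting into the Cauchy--Binet expansion produces exactly \eqref{eq:det_jacobian}.

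The argument is largely bookkeeping and I do not expect any serious obstacle; the only subtlety is keeping the row and column index sets straight in the submatrix notation, so that the version of Cauchy--Binet stated with leading row index set $\langle K\rangle$ applies to $\tilde S(\bx)$ and $B[\alpha,\langle M\rangle]$ after the trivial relabelling of their rows by $\alpha$.
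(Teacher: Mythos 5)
Your proposal is correct and follows essentially the same route as the paper: both rewrite the principal submatrix as $G(\bx)[\alpha] = S[\alpha,\langle M\rangle]\,\diag(\bzeta(\bx))\,B[\alpha,\langle M\rangle]^{\tra}$ and then invoke the Cauchy--Binet theorem. The only difference is that you spell out explicitly how the diagonal factor is absorbed and the $\zeta_i(\bx)$ scalars extracted by multilinearity, a detail the paper leaves as ``immediately obtained.''
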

\begin{proof}
The principal submatrix of $G(\bx)$ can be rewritten as 
\begin{align*}
    G(\bx)[\alpha] = S[\alpha,\langle M \rangle] \diag(\bzeta(\bx)) B[\alpha,\langle M \rangle]^{\tra}.
\end{align*}
By \cref{lem:cauchy_binet}, the result in \cref{eq:det_jacobian} is immediately obtained. 
\end{proof}

We then give the following theorem. 
\begin{theorem}\label{thm:local_homeo}
    Assume that $\det(SB^{\tra})\neq 0$ and for  index set $\beta \subseteq \langle M \rangle$, $\#\beta=Q$, the products 
    of the following minors of $S$ and $B$ satisfy 
    \begin{equation}\label{eq:det_same_sign}
        \det(S[\langle Q \rangle,\beta]) 
        \det(B[\langle Q \rangle,\beta]) \ge 0 \ (\text{or}\ \le0), 
    \end{equation}
    namely, all of these products are non-negative (or non-positive). Then $\bF$ defined in \cref{eq:nonlinear_systems_map} is a local homeomorphism in $\Real^{Q}$.
    \end{theorem}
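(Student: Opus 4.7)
The plan is to invoke the Remark following \cref{def:local_homeo}: since $\bF$ is smooth (the argument of the logarithm in each $F_q$ is strictly positive because $s_m^{[q]}\ge 0$ with $\sum_m s_m^{[q]}=1$ and $\zeta_m(\bx)>0$, so at least one summand is positive), it suffices to show that $\det(\Diff \bF(\bx))\neq 0$ for every $\bx\in\Real^Q$. Using the factorization $\Diff \bF(\bx) = -\Lambda(\bx) G(\bx)$, I will handle the two factors separately.

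First I would observe that $\Lambda(\bx)$ is invertible: each diagonal entry equals $\sum_{m=1}^M s_m^{[q]}\zeta_m(\bx)$, which is a non-negative combination of strictly positive quantities $\zeta_m(\bx)$ with coefficients summing to $1$, and is therefore strictly positive. Hence $\det\Lambda(\bx)>0$ and the nonvanishing of $\det(\Diff\bF(\bx))$ reduces to the nonvanishing of $\det(G(\bx))$.

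Next I would apply \cref{lem:Jacobian_det} with $\alpha=\langle Q\rangle$ to write
\[
\det(G(\bx)) \;=\; \sum_{\#\beta=Q}\Bigl(\prod_{i\in\beta}\zeta_i(\bx)\Bigr)\,\det(S[\langle Q\rangle,\beta])\det(B[\langle Q\rangle,\beta]).
\]
Under hypothesis \cref{eq:det_same_sign}, every product $\det(S[\langle Q\rangle,\beta])\det(B[\langle Q\rangle,\beta])$ has a common sign (all $\ge 0$ or all $\le 0$), and by \cref{lem:cauchy_binet},
\[
\det(SB^{\tra})\;=\;\sum_{\#\beta=Q}\det(S[\langle Q\rangle,\beta])\det(B[\langle Q\rangle,\beta]),
\]
which is nonzero by assumption. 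Consequently at least one term in this sum is strictly nonzero, and since all terms share a sign, $\det(SB^{\tra})$ is strictly positive (resp.\ negative). Multiplying each term in the $G(\bx)$-expansion by the strictly positive weight $\prod_{i\in\beta}\zeta_i(\bx)$ preserves the common sign and does not annihilate the nonzero term(s); therefore $\det(G(\bx))$ is strictly positive (resp.\ negative) for every $\bx\in\Real^Q$, hence in particular nonzero.

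Combining the two factors yields $\det(\Diff \bF(\bx))=(-1)^Q\det\Lambda(\bx)\det(G(\bx))\neq 0$ for all $\bx\in\Real^Q$. Applying the inverse function theorem (the cited Remark) then delivers the local homeomorphism conclusion. The only subtle step is the sign-tracking argument: one has to notice that the Cauchy--Binet identity forces at least one summand to be nonzero and that the common-sign hypothesis prevents cancellation when the summands are re-weighted by the positive factors $\prod_{i\in\beta}\zeta_i(\bx)$; everything else is formal.
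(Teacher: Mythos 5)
Your proposal is correct and follows essentially the same route as the paper: expand $\det(G(\bx))$ (equivalently $\det(\Diff\bF(\bx))$) via \cref{lem:Jacobian_det}, note that $\det(SB^{\tra})=\sum_{\#\beta=Q}\det(S[\langle Q\rangle,\beta])\det(B[\langle Q\rangle,\beta])\neq 0$ forces a nonvanishing term, use the common-sign hypothesis and the positive weights $\prod_{i\in\beta}\zeta_i(\bx)$ to rule out cancellation, and conclude by the inverse function theorem. The only cosmetic difference is that the paper obtains the Cauchy--Binet identity by evaluating $\det(\Diff\bF(\bx))$ at $\bx=\bzero$, while you cite it directly; these are the same observation.
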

    \begin{proof}
    Considering $\bF$ is continuously differentiable in $\Real^Q$,
    \cref{eq:jacobian_matrix}, and \cref{lem:Jacobian_det}, we have
\begin{multline*}
    \det\bigl(\Diff \bF(\bx)\bigr) = (-1)^Q
    \left(\prod_{q = 1}^Q \langle \bds^{[q]}, \bzeta(\bx) \rangle\right)^{-1}\\
     \sum_{\#\beta=Q}\left(\prod_{i \in \beta}\zeta_{i}(\bx)\right)
    \det(S[\langle Q \rangle,\beta]) 
    \det(B[\langle Q \rangle,\beta]). 
\end{multline*}
   In particular, when $\bx=\bzero$, we have
\begin{align*}
    \det\bigl(\Diff \bF(\bzero)\bigr) = (-1)^Q \sum_{\#\beta=Q} \det(S[\langle Q \rangle,\beta]) 
    \det(B[\langle Q \rangle,\beta]) = (-1)^Q \det(SB^{\tra}).
\end{align*}
    As $\det(SB^{\tra})\neq 0$, there is at least one index such that its corresponding product defined as \cref{eq:det_same_sign} is non-vanishing. Hence we have
    \begin{align*}
        \det\bigl(\Diff \bF(\bx)\bigr)\neq 0, \quad \text{for}\ \bx\in\Real^Q.
    \end{align*}
Consequently, using \cref{def:local_homeo}, we can show that $\bF$ is a local homeomorphism in $\Real^{Q}$.
\end{proof}

%
%


Furthermore, we define an index set 
\begin{align}\label{eq:def_M_index_set}
    \mathcal{M}(k,l) := \mathop{\arg\max}\limits_{m} \left\{\frac{b_{{k}{m}}}{b_{{l}{m}}} \right\},
\end{align} 
where $k,l\in \langle K\rangle, k\neq l$.

\begin{theorem}\label{thm:not_proper}
    If there exists indices $k_1, k_2$, and $m_0\in\mathcal{M}(k_1,k_2)$ such that $s_{m_0}^{[q]}>0$ for $1\le q\le Q$,  $\bF$ defined in \cref{eq:nonlinear_systems_map} is not a proper mapping.
\end{theorem}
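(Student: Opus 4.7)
My plan is to show $\bF$ fails to be proper by exhibiting an explicit unbounded ray in $\Real^K$ along which $\bF$ stays bounded. The construction is guided by the defining inequality of $\mathcal{M}(k_1,k_2)$: if $m_0\in\mathcal{M}(k_1,k_2)$, then $b_{k_1 m_0}/b_{k_2 m_0}\ge b_{k_1 m}/b_{k_2 m}$ for every $m$, i.e.
\[
b_{k_1 m_0}\,b_{k_2 m} - b_{k_1 m}\,b_{k_2 m_0} \;\ge\; 0 \qquad\text{for all } m\in\langle M\rangle,
\]
with equality at $m=m_0$. The direction I want to move in is the one that makes $\sum_k b_{km} x_k$ exactly proportional to the left-hand side above.

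Concretely, let $\bde\in\Real^K$ be the vector with $e_{k_1} = -b_{k_2 m_0}$, $e_{k_2} = b_{k_1 m_0}$, and all other coordinates zero. Since $B>0$ by \cref{ass:s_b} and $k_1\neq k_2$, we have $\bde\neq\bzero$, so $\bx_t := t\,\bde$ satisfies $\|\bx_t\|\to\infty$ as $t\to+\infty$. Plugging $\bx_t$ into the exponent gives
\[
\sum_{k=1}^K b_{km}\,x_{t,k} \;=\; t\bigl(b_{k_1 m_0}\,b_{k_2 m} - b_{k_1 m}\,b_{k_2 m_0}\bigr) \;\ge\; 0,
\]
which is $0$ at $m=m_0$. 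Hence $0\le \zeta_m(\bx_t)\le 1$ for every $m$, and $\zeta_{m_0}(\bx_t)=1$ for all $t$.

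Combining this with \cref{ass:s_b} (so $s_m^{[q]}\ge 0$ and $\sum_m s_m^{[q]}=1$) and the hypothesis $s_{m_0}^{[q]}>0$ for every $q$, I obtain
\[
0 \;<\; s_{m_0}^{[q]} \;\le\; \sum_{m=1}^M s_m^{[q]}\,\zeta_m(\bx_t) \;\le\; \sum_{m=1}^M s_m^{[q]} \;=\; 1,
\]
so that $\ln s_{m_0}^{[q]}\le F_q(\bx_t)\le 0$ for each $q$ and for every $t>0$. Therefore $\|\bF(\bx_t)\|$ remains uniformly bounded while $\|\bx_t\|\to\infty$, contradicting the definition of a proper map and proving the claim.

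There is no real obstacle; the only delicate point is choosing the direction $\bde$ so that the weighted combination $\sum_k b_{km} x_k$ is both non-negative in $m$ (to keep $\zeta_m$ bounded above) and vanishes at $m_0$ (to keep $\sum_m s_m^{[q]}\zeta_m$ bounded below by $s_{m_0}^{[q]}>0$). The $\mathcal{M}(k_1,k_2)$ condition is precisely what allows this one-dimensional construction, and the positivity of $b_{k_1 m_0},b_{k_2 m_0}$ supplied by \cref{ass:s_b} ensures $\bde$ is genuinely nonzero.
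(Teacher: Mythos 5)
Your proof is correct and follows essentially the same route as the paper: you move along the same ray (your direction $(-b_{k_2 m_0},\,b_{k_1 m_0})$ in coordinates $k_1,k_2$ is just a positive rescaling of the paper's choice $(-r,\,r\,b_{k_1 m_0}/b_{k_2 m_0})$), making $B^{\tra}\bx\ge 0$ with equality at $m_0$ so that $\bF$ stays bounded while $\|\bx\|\to\infty$. The only difference is cosmetic: you bound $F_q$ between $\ln s_{m_0}^{[q]}$ and $0$ directly rather than computing the limit of $\exp(-B^{\tra}\bx)$ as the paper does, which is a slightly cleaner way to reach the same conclusion.
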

\begin{proof}
    Let \begin{align*}
        \bar{b}:=\max_{m} \frac{b_{{k_1}{m}}}{b_{{k_2}{m}}} = \frac{b_{{k_1}{m_0}}}{b_{{k_2}{m_0}}}.
    \end{align*} 
    Taking
    \begin{align*}
        \bx=[0,\ldots,\underbrace{-r}_{k_{1}},0,\ldots,0,\underbrace{r\bar{b}}_{k_{2}},\ldots, 0]^{\tra},
    \end{align*}
where $r>0$, we then obtain the $m$-th component of $B^{\tra}\bx$ as
\begin{align*}
    (B^{\tra}\bx)_m = \sum_{k=1}^K b_{km} x_k &= -b_{{k_1}m}r + b_{{k_2}m}r\bar{b} \\
    & \ge r \left(-b_{{k_1}m} + b_{{k_2}m} \frac{b_{{k_1}{m}}}{b_{{k_2}{m}}}\right) \\
    & = 0,
\end{align*}
For $m_0$, we have $(B^{\tra}\bx)_{m_0} = r(-b_{{k_1}{m_0}} + b_{{k_2}{m_0}}\bar{b})=0$. Moreover, $s_{m_0}^{[q]}>0$ for all $1\le q\le Q$. Therefore, we obtain
\begin{align*}
    \lim_{\|\bx\|\rightarrow\infty} \exp(-B^{\tra}\bx) = \lim_{r\rightarrow\infty} \exp(- B^{\tra}\bx)=[0,\ldots,\underbrace{1}_{m_0},\ldots, 0]^{\tra}.
\end{align*}
There may exist $m_i\in \mathcal{M}(k_1,k_2)$ implying $(B^{\tra}\bx)_{m_i}=0$, and there may be other non-vanishing components in the above limit vector. Therefore, we have
    \begin{align*}
        \lim_{\|\bx\|\rightarrow\infty} \|\bF(\bx)\| = \lim_{r\rightarrow\infty} \Big\|\ln \begin{bmatrix}
            \langle \bds^{[1]}, \exp(-B^{\tra}\bx) \rangle \\
            \vdots \\
            \langle \bds^{[Q]}, \exp(-B^{\tra}\bx) \rangle 
        \end{bmatrix}\Big\| 
        = \Big\| \ln \begin{bmatrix}
            \sum_{i} s_{m_i}^{[1]}\\
            \vdots \\
            \sum_{i} s_{m_i}^{[Q]}
        \end{bmatrix}\Big\|.
    \end{align*}
Considering \cref{ass:s_b}, we now have
\begin{align}
    \lim_{\|\bx\|\rightarrow\infty} \|\bF(\bx)\| \le +\infty,
\end{align}
which implies that $\bF$ is not a proper mapping.
\end{proof}

For the more general case, we have the following result:
\begin{corollary}\label{thm:general_not_proper}
 For any $\bx\in\Real^Q$ and indices $m_1,m_2,\ldots,m_l$ such that 
    \begin{align*}
        B^{\tra}\bx \ge0 \quad \text{and}\quad (B^{\tra}\bx)_{m_i} = 0,
    \end{align*}
    where $1\le i \le l$. If there exists $m_i$ such that  
    $s_{m_i}^{[q]}>0$ for  $1\le q\le Q$, then $\bF$ defined in \cref{eq:nonlinear_systems_map} is not a proper mapping.
\end{corollary}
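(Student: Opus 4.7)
The plan is to reuse the blow-up idea from the proof of \cref{thm:not_proper}, but to take the ray in the direction of the given $\bx$ instead of a hand-crafted two-coordinate vector. Concretely, assuming $\bx \neq \bzero$ (otherwise scaling gives nothing to work with), I would substitute $r\bx$ into $\bF$ and let $r \to +\infty$, then exhibit a finite limit for $\|\bF(r\bx)\|$ while $\|r\bx\|\to\infty$.

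First I would exploit linearity: $B^{\tra}(r\bx) = r\,B^{\tra}\bx$, so the $m$-th entry of $\exp(-B^{\tra}(r\bx))$ equals $\exp(-r(B^{\tra}\bx)_m)$. The hypothesis $B^{\tra}\bx \ge \bzero$ is crucial here, because it forces this entry to either stay at $1$ (when $(B^{\tra}\bx)_m = 0$) or decay to $0$ (when $(B^{\tra}\bx)_m > 0$); without non-negativity some coordinate could instead blow up to $+\infty$, ruining the argument. Letting $\mathcal{Z} := \{m : (B^{\tra}\bx)_m = 0\} \supseteq \{m_1,\ldots,m_l\}$, I would conclude that
\[
\lim_{r\to\infty} \langle \bds^{[q]}, \exp(-B^{\tra}(r\bx))\rangle = \sum_{m \in \mathcal{Z}} s_m^{[q]}, \quad 1\le q \le Q.
\]

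Next I would verify that each of these $Q$ limits is strictly positive and thus each $\ln(\cdot)$ in $\bF(r\bx)$ is finite. This is exactly where the assumption on the privileged index $m_i$ with $s_{m_i}^{[q]}>0$ for every $q$ enters: that single index already forces $\sum_{m\in\mathcal{Z}} s_m^{[q]} > 0$ for all $q$, so the logarithm is well defined. Together with $\sum_m s_m^{[q]} = 1$ from \cref{ass:s_b}, each limit lies in $(-\infty, 0]$, and hence $\|\bF(r\bx)\|$ is uniformly bounded in $r$. Since $\|r\bx\| \to \infty$, this contradicts properness.

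The only real obstacle is bookkeeping at the boundary: making sure I have correctly identified \emph{all} indices contributing a $1$ in the limit vector (not only the listed $m_1,\ldots,m_l$, but every $m \in \mathcal{Z}$), and invoking \cref{ass:s_b} carefully to keep the logarithm finite. Once these two points are nailed down, the corollary follows by essentially the same limiting estimate as in \cref{thm:not_proper}, only applied along the general direction $\bx$ rather than the specific difference vector constructed there.
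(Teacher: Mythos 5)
Your proposal is correct and is essentially the paper's own argument: the paper simply says the proof is ``similar to that of \cref{thm:not_proper}'', i.e.\ blow up along the ray $r\bx$, note that $\exp(-rB^{\tra}\bx)$ tends to the indicator of the zero set $\mathcal{Z}$, and use the privileged index $m_i$ (plus \cref{ass:s_b}) to keep each $F_q$ finite, contradicting properness. Your explicit handling of $\bx\neq\bzero$ and of all indices in $\mathcal{Z}$ (not just $m_1,\ldots,m_l$) matches, and indeed slightly tightens, the intended argument.
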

\begin{proof}
    The proof is similar to that of \cref{thm:not_proper}.
\end{proof}


The result in \cref{thm:not_proper} implies a necessary condition for that $\bF$ is a 
proper mapping. We claim that this condition is sufficient and necessary for the case of two-dimensional  DECT.

\begin{theorem}\label{thm:DECT_proper}
Let $Q=K=2$, then $\bF$ defined in \cref{eq:nonlinear_systems_map} is a proper mapping if and only if there exists index $q:=q(k,l)$ such that $s_{m}^{[q]}=0$ for any $m\in\mathcal{M}(k,l)$ (see \cref{eq:def_M_index_set} for its definition), where $k, l\in\{1,2\}$, $k\neq l$.
\end{theorem}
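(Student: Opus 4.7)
The plan is to prove both directions by examining, for each unit direction $\bz=(z_1,z_2)\in\Real^2$, the asymptotics of $\bF(r\bz)$ as $r\to\infty$. With $c_m(\bz):=-(B^{\tra}\bz)_m$ and $\mu_q(\bz):=\max_{m\in\mathrm{supp}(\bds^{[q]})}c_m(\bz)$, a direct expansion of $\bF_q(r\bz)=\ln\sum_m s_m^{[q]}\exp(r\,c_m(\bz))$ gives $\bF_q(r\bz)=r\mu_q(\bz)+O(1)$; the dominant term comes from indices attaining the max, and the subdominant terms decay exponentially. Consequently, $\bF$ is proper if and only if $(\mu_1(\bz),\mu_2(\bz))\ne(0,0)$ for every unit $\bz$.

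Necessity is handled by the contrapositive, following the construction in \cref{thm:not_proper}. Suppose the stated condition fails for some $(k,l)\in\{(1,2),(2,1)\}$; equivalently, $\sum_{m\in\mathcal{M}(k,l)}s_m^{[q]}>0$ for both $q=1,2$. Set $\bar b:=\max_m b_{km}/b_{lm}$ and $\bx_r:=r(-\bde_k+\bar b\,\bde_l)$, so that $(B^{\tra}\bx_r)_m=r\,b_{lm}(\bar b-b_{km}/b_{lm})\ge 0$ with equality exactly on $\mathcal{M}(k,l)$. Then $\exp(-B^{\tra}\bx_r)$ converges componentwise to the indicator of $\mathcal{M}(k,l)$, and $\langle\bds^{[q]},\exp(-B^{\tra}\bx_r)\rangle\to\sum_{m\in\mathcal{M}(k,l)}s_m^{[q]}>0$ for each $q$. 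Hence $\bF(\bx_r)$ stays bounded while $\|\bx_r\|\to\infty$, and $\bF$ is not proper.

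For sufficiency, denote by $q_{12}:=q(1,2)$ and $q_{21}:=q(2,1)$ the indices supplied by the hypothesis, and fix a unit $\bz$. If $z_1z_2\ge 0$ (possibly with one coordinate zero), the strict positivity of $B$ forces every $c_m(\bz)$ to share a single strict sign, so $\mu_q(\bz)\ne 0$. For $z_1>0$ and $z_2<0$ (the opposite mixed quadrant is symmetric, with the roles of $\mathcal{M}(1,2)$ and $\mathcal{M}(2,1)$ interchanged), set $\sigma:=|z_2|/z_1>0$; then $\mathrm{sign}(c_m(\bz))=\mathrm{sign}(\sigma-b_{1m}/b_{2m})$. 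Let $\underline b:=\min_m b_{1m}/b_{2m}$ and $\bar b:=\max_m b_{1m}/b_{2m}$, and observe that the argmin of $b_{1m}/b_{2m}$ equals $\mathcal{M}(2,1)$ while the argmax equals $\mathcal{M}(1,2)$. For $\sigma\notin[\underline b,\bar b]$, all $c_m$ share a common strict sign. At $\sigma=\bar b$, the hypothesis $s_m^{[q_{12}]}=0$ on $\mathcal{M}(1,2)$ gives $\mathrm{supp}(\bds^{[q_{12}]})\subseteq\mathcal{M}(1,2)^c$, where $c_m>0$, hence $\mu_{q_{12}}(\bz)>0$. At $\sigma=\underline b$, the analogous hypothesis on $\bds^{[q_{21}]}$ gives $\mu_{q_{21}}(\bz)<0$. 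For $\sigma\in(\underline b,\bar b)$, the set $A_+:=\{m:b_{1m}/b_{2m}<\sigma\}$ is nonempty, and the column-nonzero clause of \cref{ass:s_b} yields $\bigcup_q\mathrm{supp}(\bds^{[q]})=\langle M\rangle$; hence some $\bds^{[q]}$ has support meeting $A_+$, giving $\mu_q(\bz)>0$.

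The main obstacle is the intermediate range $\sigma\in(\underline b,\bar b)$, which the hypothesis, touching only the extreme sets $\mathcal{M}(k,l)$, does not directly address. Resolving it requires invoking the column-nonzero clause of \cref{ass:s_b}, which guarantees $\bigcup_q\mathrm{supp}(\bds^{[q]})=\langle M\rangle$ and so forces an intersection with every nonempty $A_+$; without this clause, one can construct $S$ with a vanishing column for which the stated condition holds yet $\bF$ fails to be proper, so this reduction is both delicate and essential.
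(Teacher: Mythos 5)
Your proposal is correct, and its skeleton matches the paper's: the necessity direction is the construction of \cref{thm:not_proper} along the ray $r(-\bde_{k}+\bar b\,\bde_{l})$, and the sufficiency direction is a directional analysis in which the hypothesis handles the extreme directions tied to $\mathcal{M}(1,2)$ and $\mathcal{M}(2,1)$ while the column-nonzero clause of \cref{ass:s_b} handles the intermediate ones — exactly the division of labor in the paper's Cases I--III. The packaging differs in two ways worth noting. First, you replace the paper's polar-coordinate, three-case derivation of explicit growth constants $c_1,c_2,c_3$ (yielding $\|\bF(\bx)\|\ge c\|\bx\|$ directly) by the Laplace-type reduction $F_q(r\bz)=r\mu_q(\bz)+O(1)$ and the criterion ``proper iff $(\mu_1(\bz),\mu_2(\bz))\neq(0,0)$ on the unit circle.'' This is cleaner, but the ``if'' half of that criterion is not purely pointwise: you should state that the $O(1)$ term is bounded uniformly in $\bz$ and $r$ (via $\sum_m s_m^{[q]}=1$ and $\min_{m\in\mathrm{supp}(\bds^{[q]})}s_m^{[q]}>0$) and that $\max_q|\mu_q|$, being a maximum of finitely many continuous (piecewise linear) functions, attains a positive minimum on the compact unit circle; with that one sentence the equivalence is airtight, and it is the analogue of the uniform constants the paper computes by hand. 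Second, your necessity argument is phrased through $\sum_{m\in\mathcal{M}(k,l)}s_m^{[q]}>0$ for both $q$, which is actually a touch more careful than the paper's reduction to \cref{thm:not_proper}: failure of the condition only provides a possibly different witness $m\in\mathcal{M}(k,l)$ for each $q$, not a single $m_0$ positive for all $q$, and your sum formulation absorbs this without any loss. Finally, you might remark that under the hypothesis the degenerate situation $\mathcal{M}(1,2)=\langle M\rangle$ cannot occur (it would force a zero spectrum row), so $\underline b<\bar b$ and your subcase split of $\sigma$ is exhaustive; the paper's ordering of the $\theta_m$ makes the same tacit assumption.
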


\begin{proof}
$``\Longrightarrow"$.
If the condition does not hold, without loss of generality, there exists $m_0\in\mathcal{M}(1,2)$, $s_{m_0}^{[q]}>0$ 
for $1\le q\le Q$. By \cref{thm:not_proper}, $\bF$ is not a proper mapping, which is a contradiction. 

$``\Longleftarrow"$.  
Using polar coordinates $\bx = r\bdn$, where $r>0$, $\bdn = [\cos\theta,\sin\theta]^{\tra}$, 
and $\theta\in[0,2\pi)$, we can obtain
\begin{align*}
    (B^{\tra}\bx)_m = \sum_{k=1}^2 b_{km}x_k &= b_{1m} r\cos\theta + b_{2m} r\sin\theta \\
    &= r\sqrt{b_{1m}^2 + b_{2m}^2} \sin(\theta+\theta_m),
\end{align*}
where $\theta_m = \arctan (b_{1m}/b_{2m})$. Assume that $\{\theta_m\}$ have the following order 
\begin{align*}
    \theta_{1}= \cdots =\theta_{l} > \theta_{l} \ge \cdots \ge \theta_{p} >\theta_{p}=\cdots= \theta_{M}. 
\end{align*}
Otherwise, they can be reordered. 
Thus the condition can be translated into that there exists $q_1$ such that $s_{m}^{[q_1]}=0$ for $m\in\mathcal{M}(1,2)=\{1,2,\ldots,l\}$, and there exists $q_2$ such that $s_{n}^{[q_2]}=0$ for $n\in\mathcal{M}(2,1)=\{p+1,p+2,\ldots,M\}$. Next, according to the value of angle $\theta$, the proof can be split into three cases.
\paragraph{Case I:} $0\le \theta <\pi-\theta_{1}+\frac{\theta_{1}-\theta_{l+1}}{2}$. For any $m\in\mathcal{M}(1,2)^c = \langle M \rangle \backslash \mathcal{M}(1,2)$, we have 
\begin{align*}
    0<\theta_M \le \theta_m \le \theta + \theta_m < \pi-\theta_{1}+\frac{\theta_{1}-\theta_{l+1}}{2} + \theta_m \le \pi-\frac{\theta_{1}-\theta_{l+1}}{2}<\pi.
\end{align*}
Since $(B^{\tra}\bdn)_m = \sqrt{b_{1m}^2 + b_{2m}^2} \sin(\theta+\theta_m)$, there exists $c_1>0$ such that 
\begin{align*}
    (B^{\tra} \bdn)_m \ge c_1,\quad \forall \ m \in \mathcal{M}(1,2)^c.
\end{align*}
 Consequently, when $r$ becomes sufficiently large,
\begin{align*}
   | F_{q_1}(\bx) | &= | \ln \langle \bds^{[q_1]}, \exp(-rB^{\tra}\bdn) \rangle |\\
   &= - \ln \sum_{m\in\mathcal{M}(1,2)^c} s_m^{[q_1]}\exp(-r(B^{\tra}\bdn)_m)  \\
   & \ge  c_1 r -\ln\sum_{m\in\mathcal{M}(1,2)^c} s_m^{[q_1]}  \\
   & = c_1 r.
\end{align*}
The last line comes from the condition that $s_{m}^{[q_1]}=0$ for any $m\in\mathcal{M}(1,2)$, and $\sum_m s_m^{[q_1]}=1$.

\paragraph*{Case II:}$\pi-\theta_{1}+\frac{\theta_{1}-\theta_{l+1}}{2}\le\theta< 2\pi-\theta_{M}-\frac{\theta_{p}-\theta_{M}}{2}$.
When $\pi-\theta_{1}+\frac{\theta_{1}-\theta_{l+1}}{2}\le\theta < \pi-\theta_M$, considering the range of $\theta+\theta_1$, we have
\begin{align*}
    \pi < \pi +\frac{\theta_{1}-\theta_{l+1}}{2} \le \theta + \theta_1< \pi-\theta_M +\theta_1 < 2\pi.
\end{align*}
When $\pi-\theta_M \le \theta < 2\pi-\theta_1$, considering the range of $\theta+\theta_p$, we have
\begin{align*}
    \pi < \pi-\theta_M +\theta_p \le \theta + \theta_p < 2\pi-\theta_1+\theta_p < 2\pi.
\end{align*}
When $2\pi-\theta_1 \le \theta < 2\pi-\theta_{M}-\frac{\theta_{p}-\theta_{M}}{2}$, considering the range of $\theta+\theta_M$, we have
\begin{align*}
    \pi< 2\pi-\theta_1 +\theta_{M} \le  \theta + \theta_{M} < 2\pi-\frac{\theta_{p}-\theta_{M}}{2} < 2\pi.
\end{align*}
Thus  there exists $c_2>0$ and index $m({\theta})$ such that 
\begin{align*}
    (B^{\tra} \bdn)_{m(\theta)} \le -c_2.
\end{align*}
Under \cref{ass:s_b}, all columns of $S$ are nonzero vectors, suggesting that there always exists $q$ such that $s_{m({\theta})}^{[q]}>0$. When $r$ becomes sufficiently large,
\begin{align*}
    | F_{q}(\bx) | &= | \ln \langle \bds^{[q]}, \exp(-rB^{\tra}\bdn) \rangle |\\
   &\ge  \ln s_{m({\theta})}^{[q]}\exp(-r(B^{\tra}\bdn)_{m({\theta})})  \\
   & \ge \tilde{c}_2 r - |\ln s_{m({\theta})}^{[q]}| \\
   & \ge c_2 r.
\end{align*}
Note that for given $S$, $|\ln s_{m({\theta})}^{[q]}|$ is bounded under \cref{ass:s_b}.

\paragraph*{Case III:}$2\pi-\theta_{M}-\frac{\theta_{p}-\theta_{M}}{2}\le \theta<2\pi$. In this case, for any $m\in\mathcal{M}(2,1)^c$, we have
\begin{align*}
    0<\frac{\theta_{p}-\theta_{M}}{2} \le -\theta_{M}-\frac{\theta_{p}-\theta_{M}}{2} + \theta_m \le (\theta + \theta_m) -2\pi < \theta_m <\theta_1 <\frac{\pi}{2}. 
\end{align*} 
There exists  then $c_3>0$ such that 
\begin{align*}
    (B^{\tra} \bdn)_m \ge c_3,\quad \forall  m \in \mathcal{M}(2,1)^c.
\end{align*}
Similarly, when $r$ becomes sufficiently large,
\begin{align*}
    | F_{q_2}(\bx) | &= | \ln \langle \bds^{[q_2]}, \exp(-rB^{\tra}\bdn) \rangle |\\
    &= - \ln \sum_{m\in\mathcal{M}(2,1)^c} s_m^{[q_2]}\exp(-r(B^{\tra}\bdn)_m)  \\
    & \ge c_3 r - \ln\sum_{m\in\mathcal{M}(2,1)^c} s_m^{[q_2]}  \\
    & = c_3 r.
 \end{align*} 

We have shown that for any $\bx\in\Real^Q$,
\begin{align*}
    \|\bF(\bx)\| \ge \frac{1}{\sqrt{2}} \sum_{q} |F_{q}(\bx)| \ge \frac{1}{\sqrt{2}}  \min \{c_1, c_2, c_3\} r = c\|\bx\|,
\end{align*}
where $c := \min \{c_1, c_2, c_3\}/\sqrt{2}$.
Finally, we obtain
\begin{align*}
    \lim_{\|\bx\|\rightarrow +\infty} \|\bF(\bx)\| = +\infty,
\end{align*}
which completes the proof.
\end{proof}

\begin{remark}
If $s_{m_0}^{[q]}=0$ for all $q$, then taking $\bx=[0,\ldots,r,\ldots,0]^{\tra}$, 
we have $\lim\limits_{\|\bx\|\rightarrow\infty} \|\bF(\bx)\|=\sqrt{2}\nrightarrow \infty$, 
so $\bF(\bx)$ is not a proper mapping. Hence, it is necessary that all columns of $S$ are non-vanishing 
as indicated in \cref{ass:s_b}. 
\end{remark}

Combining with \cref{thm:Hadamard}, we can further claim that $\bF$ is a homeomorphism on $\Real^2$. Consequently, the existence and uniqueness condition for the solution of nonlinear system $\bF(\bx) = \bdg$ is given by the following theorem.


\begin{theorem}\label{thm:DECT_homeo}
    Let $Q=K=2$. Suppose that the conditions in \cref{thm:local_homeo} hold. Furthermore, there exists index $q:=q(k,l)$ such that $s_{m}^{[q]}=0$ for any $m\in\mathcal{M}(k,l)$, where $k, l\in\{1,2\}$, $k\neq l$. Then, $\bF$ defined in \cref{eq:nonlinear_systems_map} is a homeomorphism. 
\end{theorem}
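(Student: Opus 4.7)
The plan is essentially to assemble the pieces already forged earlier in the section and invoke Hadamard's theorem (Proposition \ref{thm:Hadamard}), which characterises a homeomorphism $\bR^n \to \bR^n$ as exactly a local homeomorphism that is also proper. So I need to verify both properties for $\bF$ under the stated hypotheses, and this is largely a bookkeeping argument rather than a conceptual one.

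First I would invoke Theorem \ref{thm:local_homeo}: the assumption that its hypotheses (namely $\det(SB^{\tra}) \neq 0$ together with the sign condition on the products of $Q\times Q$ minors of $S$ and $B$) are in force immediately gives that $\bF$ is a local homeomorphism on $\Real^2$. In particular, this shows that $\det(\Diff \bF(\bx)) \neq 0$ everywhere, so by the inverse function theorem $\bF$ is locally invertible with continuous inverse near every point.

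Next I would establish that $\bF$ is proper. This is exactly the sufficiency direction (``$\Longleftarrow$'') of Theorem \ref{thm:DECT_proper} specialised to $Q=K=2$: the additional standing assumption that for each pair $k,l \in \{1,2\}$ with $k \neq l$ there exists an index $q = q(k,l)$ with $s_m^{[q]} = 0$ for every $m \in \mathcal{M}(k,l)$ is precisely the criterion shown there to imply $\|\bF(\bx)\| \to \infty$ as $\|\bx\| \to \infty$. Hence $\bF$ is a proper map from $\Real^2$ to $\Real^2$.

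Finally, with both a local homeomorphism and a proper mapping in hand, Proposition \ref{thm:Hadamard} applies and yields that $\bF : \Real^2 \to \Real^2$ is a (global) homeomorphism, completing the proof. There is no real obstacle in this argument; the only thing to be careful about is to verify that the two input hypotheses of \cref{thm:DECT_homeo} exactly match, respectively, the hypotheses required by \cref{thm:local_homeo} (for the local piece) and the sufficient condition of \cref{thm:DECT_proper} (for the properness piece), so that the invocation of \cref{thm:Hadamard} is legitimate. A short closing remark could also point out that the resulting homeomorphism gives simultaneously the existence, uniqueness, and continuous dependence of the solution $\bx$ of $\bF(\bx) = \bdg$ on the data $\bdg$ in the DECT setting, previewing the stability discussion to follow.
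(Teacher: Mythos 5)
Your proposal is correct and follows exactly the paper's argument: invoke \cref{thm:local_homeo} for the local homeomorphism, the sufficiency direction of \cref{thm:DECT_proper} for properness, and then \cref{thm:Hadamard} (Hadamard's global inverse theorem) to conclude that $\bF$ is a homeomorphism on $\Real^2$. No gaps; the bookkeeping on matching hypotheses is all that is needed, just as in the paper.
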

\begin{proof}
    By \cref{thm:local_homeo}, $\bF$ is a local 
    homeomorphism, and by \cref{thm:DECT_proper}, $\bF$ is a proper mapping. 
    Using \cref{thm:Hadamard}, we obtain the desired result.
\end{proof}

\begin{remark}\label{remark:2D_diffeo}
    The author in \cite{Gordon1972OnTD} proved a result that continuously differentiable mapping $\bH$ from $\Real^n$ to $\Real^n$ is a diffeomorphism if and only if $\bH$ is a proper mapping and its Jacobian determinant $\det(\Diff \bH)$ never vanishes. Based upon this theorem, we conclude that the mapping defined in \cref{eq:nonlinear_systems_map} is a diffeomorphism under the conditions in \cref{thm:DECT_homeo}.
\end{remark}

\subsection{Uniqueness}
We assume the nonlinear system in \cref{eq:nonlinear_systems_map} has at least a solution. We will analyze the uniqueness of the solution. First of all, we have the following lemma. 
\begin{lemma}\label{lem:linear_invertible}
Let $\bH, \tilde{\bH}: \Real^{n} \rightarrow \Real^{n}$ be two mappings such that $\tilde{\bH}=A_1 \circ \bH \circ A_2$ where $A_1,A_2: \mathbb{R}^{n} \rightarrow$ $\Real^{n}$ are invertible linear transformations. Then, $\tilde{\bH}$ is injective if and only if $\bH$ is injective. 
\end{lemma}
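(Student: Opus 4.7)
The plan is to reduce this to the elementary fact that a composition of injective maps is injective, and that invertibility of a linear transformation immediately implies injectivity. Since the hypotheses are symmetric under inverting $A_1$ and $A_2$, I only need to argue one direction carefully and then invoke the same argument for the other direction.

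For the ``if'' direction, suppose $\bH$ is injective. Because $A_1$ and $A_2$ are invertible, they are in particular injective. I would show directly that $\tilde\bH(\by_1)=\tilde\bH(\by_2)$ forces $\by_1=\by_2$: applying $A_1^{-1}$ to both sides of $A_1(\bH(A_2(\by_1)))=A_1(\bH(A_2(\by_2)))$ yields $\bH(A_2(\by_1))=\bH(A_2(\by_2))$; injectivity of $\bH$ gives $A_2(\by_1)=A_2(\by_2)$; and injectivity of $A_2$ finally gives $\by_1=\by_2$.

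For the ``only if'' direction, observe that from $\tilde\bH = A_1 \circ \bH \circ A_2$ one has $\bH = A_1^{-1} \circ \tilde\bH \circ A_2^{-1}$. Since $A_1^{-1}$ and $A_2^{-1}$ are again invertible linear transformations, the same chain of implications as in the previous paragraph, with the roles of $\bH$ and $\tilde\bH$ swapped, shows that if $\tilde\bH$ is injective then $\bH$ is injective.

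There is no real obstacle here; the statement is essentially a bookkeeping lemma used later to replace the original nonlinear system by a conveniently normalized one (e.g., absorbing factors of $\Lambda$ or changes of basis in $S$ and $B$) without affecting injectivity. The only care needed is not to conflate linearity with the weaker property of being bijective: the argument above uses only bijectivity of $A_1$ and $A_2$, so linearity is not in fact required, but I would keep the linear framing since that is how the lemma will be applied in the sequel.
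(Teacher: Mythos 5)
Your argument is correct and complete: the composition-of-injections reasoning in one direction, plus rewriting $\bH = A_1^{-1}\circ\tilde{\bH}\circ A_2^{-1}$ for the converse, is exactly the standard proof. The paper itself does not spell this out but simply defers to Proposition 5 of the cited work of Bal and Terzioglu, whose underlying argument is the same as yours, so there is nothing to add.
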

\begin{proof}
    The proof is similar to that of proposition 5 in \cite{Bal_2020}.
\end{proof}

Next, we give a sufficient condition for the globally injective property of mapping $\bF$.

\begin{theorem}\label{thm:univalence_SPCT}
Assume that $\det(SB^{\tra})\neq 0$ and for all indexes $\alpha\subseteq \langle Q \rangle$, 
$\beta \subseteq \langle M \rangle$, $\#\beta=\#\alpha$, the products of the following minors of $S$ and $B$ satisfy  
\begin{equation}\label{eq:same_sign}
    \det(S[\alpha,\beta]) 
    \det(B[\alpha,\beta]) \ge 0, 
\end{equation}
\ie, all of these products are non-negative. Then $\bF$ defined in \cref{eq:nonlinear_systems_map} is globally injective in $\Real^{Q}$.
\end{theorem}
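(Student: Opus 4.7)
The plan is to reduce the injectivity of $\bF$ to an application of Parthasarathy's criterion (\cref{lem:parth_injective}) applied to the modified map $\tilde{\bF} := -\bF$. By \cref{lem:linear_invertible} (take $A_1 = -I$, $A_2 = I$), $\bF$ is injective if and only if $\tilde{\bF}$ is, so it suffices to verify the hypotheses of \cref{lem:parth_injective} for $\tilde{\bF}$ in $\Real^Q$. Since $\Diff \bF(\bx) = -\Lambda(\bx)G(\bx)$ with $\Lambda(\bx)$ a positive diagonal matrix, I have $\Diff \tilde{\bF}(\bx) = \Lambda(\bx) G(\bx)$, and for every index set $\alpha \subseteq \langle Q \rangle$ the corresponding principal submatrix factorizes as $\Diff \tilde{\bF}(\bx)[\alpha] = \Lambda(\bx)[\alpha]\, G(\bx)[\alpha]$, so its determinant equals $\det(\Lambda(\bx)[\alpha])\,\det(G(\bx)[\alpha])$.

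Next I would invoke \cref{lem:Jacobian_det} to expand
\begin{equation*}
\det\bigl(G(\bx)[\alpha]\bigr) = \sum_{\#\beta = \#\alpha} \Bigl(\prod_{i\in\beta} \zeta_i(\bx)\Bigr) \det(S[\alpha,\beta])\,\det(B[\alpha,\beta]).
\end{equation*}
Because $\zeta_i(\bx) > 0$ for all $i$ and $\bx$, and the sign hypothesis \cref{eq:same_sign} makes every product $\det(S[\alpha,\beta])\,\det(B[\alpha,\beta])$ non-negative, each principal minor $\det(G(\bx)[\alpha])$ is non-negative. Multiplying by $\det(\Lambda(\bx)[\alpha]) > 0$, the same holds for every principal minor of $\Diff \tilde{\bF}(\bx)$.

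It remains to upgrade the top minor (i.e., $\alpha = \langle Q \rangle$) to a strict positivity. Here I use the Cauchy--Binet identity $\det(SB^{\tra}) = \sum_{\#\beta = Q} \det(S[\langle Q \rangle,\beta])\det(B[\langle Q \rangle,\beta])$ together with $\det(SB^{\tra}) \neq 0$: since every summand is non-negative, the sum being nonzero forces at least one strictly positive summand. The corresponding term in the expansion of $\det(G(\bx))$ carries the positive factor $\prod_{i\in\beta} \zeta_i(\bx) > 0$, so $\det(G(\bx)) > 0$, and hence $\det(\Diff \tilde{\bF}(\bx)) > 0$ for all $\bx \in \Real^Q$. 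Combined with the non-negativity of all other principal minors, this shows $\Diff \tilde{\bF}(\bx)$ is a weak P-matrix everywhere.

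With both hypotheses of \cref{lem:parth_injective} verified (taking the exceptional rectangle $\Omega$ to be empty, since the weak P-matrix property in fact holds on all of $\Real^Q$), I conclude that $\tilde{\bF}$, and therefore $\bF$, is globally injective on $\Real^Q$. The main obstacle I anticipate is not the logical structure, which is almost a direct application of the cited propositions, but rather the bookkeeping needed to transfer the sign information from $-\bF$ to $\bF$ cleanly, and the observation that the non-negative sum $\det(SB^{\tra})$ being nonzero upgrades the determinant bound from $\ge 0$ to $> 0$, which is what makes the weak P-matrix hypothesis uniform rather than only asymptotic.
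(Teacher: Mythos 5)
Your proposal is correct and follows essentially the same route as the paper: pass to $-\bF$, use \cref{lem:Jacobian_det} (Cauchy--Binet) with the sign condition \cref{eq:same_sign} to get non-negative principal minors, use $\det(SB^{\tra})\neq 0$ to force strict positivity of the top determinant (exactly as in the proof of \cref{thm:local_homeo}), and conclude via \cref{lem:parth_injective} and \cref{lem:linear_invertible}. The only cosmetic difference is that you spell out the weak-P-matrix and strict-positivity bookkeeping that the paper delegates to the earlier proof.
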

\begin{proof}
Firstly, we consider mapping $-\bF$. By \cref{eq:jacobian_matrix} and \cref{lem:Jacobian_det}, for any principal minor of  $\Diff(-\bF)(\bx)$
\begin{multline*}
\det\bigl(\Diff (-\bF)(\bx)[\alpha]\bigr) =
\det\bigl(\Lambda(\bx)[\alpha]\bigr) \det\bigl(G(\bx)[\alpha]\bigr) = 
\left(\prod_{q \in \alpha} \langle \bds^{[q]}, \bzeta(\bx) \rangle\right)^{-1} \\
  \sum_{\#\beta=\#\alpha}\left(\prod_{i \in \beta}\zeta_{i}(\bx)\right)
  \det(S[\alpha,\beta]) 
  \det(B[\alpha,\beta]). 
\end{multline*}
For $\alpha = \langle Q \rangle$, recalling the proof of \cref{thm:local_homeo}, 
we have $\det\bigl(\Diff (-\bF)(\bx)\bigr)>0$. For $\alpha\subset \langle Q \rangle$, by \cref{eq:same_sign}, $\det\bigl(\Diff (-\bF)(\bx)[\alpha]\bigr)$ is non-negative for $\bx\in\Real^Q$.
By \cref{lem:parth_injective}, we know that $-\bF$ is an injective mapping, 
and then using \cref{lem:linear_invertible}, $\bF$ is also injective.
\end{proof}
\begin{remark}
    Note that the sign condition in \cref{eq:same_sign} is well-defined since it is automatically true when $B=S$.
\end{remark}

In particular, when $Q=K=2$, by the conditions in \cref{thm:local_homeo}, we can also prove that $\bF$ is global injective in $\Real^2$.  This corresponds to the case of DECT.

\begin{theorem}\label{thm:univalence_DECT}
Let $Q=K=2$. Assume that the conditions in \cref{thm:local_homeo} hold.
Then $\bF$ defined in \cref{eq:nonlinear_systems_map} is globally injective in $\Real^{2}$.
    \end{theorem}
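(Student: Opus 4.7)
The plan is to apply \cref{lem:gale_R2_injective} directly with $\Omega=\Real^{2}$ and $\bH=\bF$. Since $\bF$ is continuously differentiable, it suffices to show that none of the principal minors of $\Diff \bF(\bx)$ vanishes at any $\bx\in\Real^{2}$. For $Q=K=2$ these principal minors consist of the two diagonal entries of $\Diff \bF(\bx)$ (the two $1\times 1$ principal minors) together with the full Jacobian determinant (the single $2\times 2$ principal minor), so only these three quantities need to be controlled.

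First, the non-vanishing of $\det\bigl(\Diff \bF(\bx)\bigr)$ on all of $\Real^{2}$ is precisely what was established inside the proof of \cref{thm:local_homeo} from the assumption $\det(SB^{\tra})\neq 0$ together with the common-sign condition \cref{eq:det_same_sign}. I would simply quote that computation for the $2\times 2$ minor and move on.

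Next, I would write out the $q$-th diagonal entry of $\Diff \bF(\bx)=-\Lambda(\bx)G(\bx)$ explicitly. Because $\Lambda(\bx)$ is diagonal with $q$-th entry $\langle \bds^{[q]},\bzeta(\bx)\rangle^{-1}$ and $G(\bx)[q,q]=\sum_{m=1}^{M} s_{m}^{[q]}\zeta_{m}(\bx)b_{qm}$, this diagonal entry equals
\[
-\,\frac{\sum_{m=1}^{M} s_{m}^{[q]}\,\zeta_{m}(\bx)\, b_{qm}}{\sum_{m=1}^{M} s_{m}^{[q]}\,\zeta_{m}(\bx)}.
\]
Invoking \cref{ass:s_b}, which gives $B>0$, $\zeta_{m}(\bx)>0$, $s_{m}^{[q]}\ge 0$ and $\sum_{m}s_{m}^{[q]}=1$ (so the $q$-th row of $S$ necessarily contains at least one strictly positive entry), both the numerator and the denominator above are strictly positive for every $\bx\in\Real^{2}$. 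Hence each diagonal entry of $\Diff \bF(\bx)$ is in fact strictly negative, and in particular nowhere vanishing on $\Real^{2}$.

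There is no serious obstacle here: the argument rests on recycling the determinant computation from the proof of \cref{thm:local_homeo} for the $2\times 2$ minor and on the strict positivity of $B$ in \cref{ass:s_b} for the two $1\times 1$ minors. Combining these two observations with \cref{lem:gale_R2_injective} then yields the global injectivity of $\bF$ on $\Real^{2}$, completing the proof.
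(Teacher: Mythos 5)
Your proposal is correct and takes essentially the same route as the paper's proof: non-vanishing of the $2\times 2$ Jacobian determinant recycled from the computation in \cref{thm:local_homeo}, strict negativity (hence non-vanishing) of the two diagonal entries of $\Diff\bF(\bx)$ from \cref{ass:s_b}, and then an appeal to \cref{lem:gale_R2_injective}. The only cosmetic difference is that the paper invokes the proposition on an arbitrary rectangle and notes that injectivity on every rectangle gives global injectivity, whereas you take $\Omega=\Real^{2}$ directly; this is harmless since $\Real^{2}$ is itself a (generalized, unbounded) rectangle, or equivalently any two points of $\Real^{2}$ lie in a common bounded rectangle.
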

\begin{proof}
    Similar to the proof of \cref{thm:univalence_SPCT}, we have 
    \begin{align*}
        \det (\Diff \bF(\bx)) \neq 0 \quad\text{for}\ \bx\in\Real^2.
    \end{align*}
By \cref{ass:s_b}, the first-order principal minors also never vanish for any $\bx\in\Real^2$. 
By \cref{lem:gale_R2_injective}, we obtain that $\bF$ is injective in any rectangle, which implies the desired result. 
\end{proof}

Using \cref{lem:linear_invertible}, we study whether there is invertible linear transformation $A$ such that $\Diff (-\bF \circ A)$ is a (weak) P-matrix for any $\bx\in\Real^Q$. 
In fact, we have
\begin{align*}
    \mathcal{D}(-\bF\circ A)(\bx) = \Lambda(\bx) S \diag(\bzeta(\bx))
    B^{\tra} A.
\end{align*}
By \cref{thm:univalence_SPCT}, the question is equivalent to asking whether there is  invertible linear transformation $A$ such that $S$ and $A^{\tra} B$ satisfy the sign conditions as in \cref{eq:same_sign} or \cref{eq:det_same_sign}. If it holds, $\mathcal{D}(-\bF\circ A)(\bx)$ is (weak) P-matrix for all $\bx\in\Real^Q$. 
As an example, for \cref{eq:det_same_sign}, we have the following theorem.
\begin{theorem}
    Assume that condition \cref{eq:det_same_sign} does not hold. None of such invertible linear transformation $A$ such that $S$ and $A^{\tra}B$ satisfy condition \cref{eq:det_same_sign}.
\end{theorem}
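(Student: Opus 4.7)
The plan is to reduce the claim to a one-line observation: replacing $B$ by $A^{\tra}B$ multiplies every relevant minor of $B$ by the common scalar $\det(A)$, so it multiplies every product in \cref{eq:det_same_sign} by that same scalar and therefore cannot fix a sign disagreement.

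First I would unwind the definitions. Since $A$ is an invertible $Q\times Q$ matrix (recall $Q=K$ under \cref{ass:s_b}), the matrix $A^{\tra}B$ is again $Q\times M$, and for any index set $\beta\subseteq \langle M\rangle$ with $\#\beta = Q$, column selection commutes with left multiplication: $(A^{\tra}B)[\langle Q\rangle,\beta] = A^{\tra}\cdot B[\langle Q\rangle,\beta]$. Taking determinants gives
\[
\det\bigl((A^{\tra}B)[\langle Q\rangle,\beta]\bigr) = \det(A)\,\det\bigl(B[\langle Q\rangle,\beta]\bigr).
\]
Multiplying by $\det(S[\langle Q\rangle,\beta])$ yields the pivotal identity
\[
\det\bigl(S[\langle Q\rangle,\beta]\bigr)\det\bigl((A^{\tra}B)[\langle Q\rangle,\beta]\bigr)
= \det(A)\cdot \det\bigl(S[\langle Q\rangle,\beta]\bigr)\det\bigl(B[\langle Q\rangle,\beta]\bigr).
\]

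Next I would exploit this identity. The hypothesis that \cref{eq:det_same_sign} fails for $(S,B)$ means there are index sets $\beta_{1},\beta_{2}$ of size $Q$ with
\[
\det(S[\langle Q\rangle,\beta_{1}])\det(B[\langle Q\rangle,\beta_{1}])>0
\quad\text{and}\quad
\det(S[\langle Q\rangle,\beta_{2}])\det(B[\langle Q\rangle,\beta_{2}])<0.
\]
Because $A$ is invertible, $\det(A)\neq 0$; whether $\det(A)>0$ or $\det(A)<0$, multiplying both products by $\det(A)$ preserves the fact that one is strictly positive and the other strictly negative (merely swapping which is which in the negative case). Hence the pair $(S, A^{\tra}B)$ violates \cref{eq:det_same_sign} as well, proving the contrapositive of the theorem.

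There is no real obstacle here; the statement is essentially a restatement of the fact that $B\mapsto A^{\tra}B$ scales every maximal minor of $B$ by the single constant $\det(A)$, so a global sign condition across all such minors is invariant (up to an overall flip) under invertible left multiplication. The only point to be careful about is citing the correct dimensions so that $(A^{\tra}B)[\langle Q\rangle,\beta]$ is the $Q\times Q$ square submatrix $A^{\tra}B[\langle Q\rangle,\beta]$ whose determinant factors as claimed, and noting that the alternative $\leq 0$ branch of \cref{eq:det_same_sign} is handled identically since the scalar factor $\det(A)$ is common to every term.
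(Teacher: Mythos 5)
Your proposal is correct and matches the paper's own argument: both rest on the identity $\det\bigl((A^{\tra}B)[\langle Q\rangle,\beta]\bigr)=\det(A)\det\bigl(B[\langle Q\rangle,\beta]\bigr)$ and the observation that multiplying every product in \cref{eq:det_same_sign} by the nonzero scalar $\det(A)$ cannot reconcile a strictly positive product with a strictly negative one. Your write-up is in fact slightly more explicit than the paper's (spelling out that column selection commutes with left multiplication and treating the sign of $\det(A)$ in both cases), but it is the same proof.
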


\begin{proof}
Without loss of generality, assume that there exist index sets $\beta_1, \beta_2 \subseteq \langle M \rangle$, $\#\beta_1 = \#\beta_2 =Q$ such that 
\begin{equation*}
    \det(S[\langle Q \rangle,\beta_1]) \det(B[\langle Q \rangle,\beta_1]) < 0, \quad \det(S[\langle Q \rangle,\beta_2]) \det(B[\langle Q \rangle,\beta_2]) > 0.
\end{equation*}
Notice that we have 
\begin{align*}
    \det(A^{\tra}B[\langle Q \rangle,\beta]) = \det(A) \det(B[\langle Q \rangle,\beta]),
\end{align*}
whereas $\det(A)$ is either positive or negative, which implies that the sign condition still does not hold. 
\end{proof}

\subsection{Stability}
With the injectivity of mapping $\bF$ in \cref{eq:nonlinear_systems_map} established, 
we discuss next the specific stability results of model \cref{eq:nonlinear_systems_map} in some bounded region.

\begin{theorem}\label{thm:stability_general}
    Suppose that the conditions of \cref{thm:univalence_SPCT} hold. 
    Let $\Omega$ be a closed bounded region, and assume that $\bF(\Omega)$ is convex. Then for every $\bx_1,\bx_2\in\Omega$, we have
    \begin{align*}
        \|\bx_1 - \bx_2\| \le \gamma  \|\bF(\bx_1) - \bF(\bx_2) \|.
    \end{align*}
    Here the constant $\gamma$ is given by 
    \begin{align*}
        \gamma:= Q^{\frac{Q}{2}}  \bigl(\max_{k,m} |b_{km}|\bigr) 
        \left(\min\limits_{\beta\subset \langle n\rangle,\bx\in\Omega} \frac{\prod_{i \in \beta}\zeta_{i}(\bx)}{\prod_{q = 1}^Q \langle \bds^{[q]}, \bzeta(\bx) \rangle} \right)^{-1}
        |\det\bigl( SB^{\tra} \bigr) |^{-1}.
    \end{align*}
\end{theorem}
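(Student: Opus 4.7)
The plan is to express $\bx_1-\bx_2$ as an integral of the inverse Jacobian along the segment from $\bF(\bx_2)$ to $\bF(\bx_1)$ (which lies in $\bF(\Omega)$ by convexity), and then to bound $\|\Diff\bF(\bx)^{-1}\|$ uniformly on $\Omega$ through the adjugate-over-determinant identity. By \cref{thm:univalence_SPCT} the mapping $\bF$ is injective, and the Jacobian determinant never vanishes (see the computation in the proof of \cref{thm:local_homeo}), so \cref{lem:general_IFT} supplies a continuously differentiable inverse $\bF^{-1}$ on a neighborhood of $\bF(\Omega)$ with $\Diff\bF^{-1}(\by) = (\Diff\bF(\bF^{-1}(\by)))^{-1}$. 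Convexity of $\bF(\Omega)$ then permits the fundamental theorem of calculus along $\by_t = (1-t)\bF(\bx_2) + t\bF(\bx_1)$:
\begin{equation*}
\bx_1 - \bx_2 = \left(\int_0^1 \Diff\bF\bigl(\bF^{-1}(\by_t)\bigr)^{-1}\, dt\right) \bigl(\bF(\bx_1) - \bF(\bx_2)\bigr),
\end{equation*}
so the whole theorem reduces to the uniform estimate $\sup_{\bx\in\Omega}\|\Diff\bF(\bx)^{-1}\| \le \gamma$.

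\textbf{Lower bound on the determinant.} Write $\Diff\bF(\bx)^{-1} = \operatorname{adj}(\Diff\bF(\bx))/\det(\Diff\bF(\bx))$. For the denominator, the expansion already performed in the proof of \cref{thm:local_homeo} together with \cref{lem:Jacobian_det} gives
\begin{equation*}
|\det\Diff\bF(\bx)| = \frac{\sum_{\#\beta=Q}\bigl(\prod_{i\in\beta}\zeta_i(\bx)\bigr)\det(S[\langle Q\rangle,\beta])\det(B[\langle Q\rangle,\beta])}{\prod_{q=1}^{Q}\langle\bds^{[q]},\bzeta(\bx)\rangle}.
\end{equation*}
The sign hypothesis of \cref{thm:univalence_SPCT} makes every summand non-negative, so I factor out the pointwise infimum of the ratio $\prod_{i\in\beta}\zeta_i(\bx)/\prod_q\langle\bds^{[q]},\bzeta(\bx)\rangle$ over $\beta$ and $\bx\in\Omega$, and collapse the remaining sum back to $|\det(SB^{\tra})|$ by \cref{lem:cauchy_binet}. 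This is precisely the reciprocal of the two last factors appearing in $\gamma$, and positivity of the infimum is automatic since $\zeta_m>0$, $\bF$ is continuous, and $\Omega$ is compact.

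\textbf{Upper bound on the adjugate and conclusion.} Each entry of $\Diff\bF(\bx)$ is a convex combination of the $b_{km}$ against the non-negative weights $s_m^{[q]}\zeta_m(\bx)/\langle\bds^{[q]},\bzeta(\bx)\rangle$, hence satisfies $|(\Diff\bF(\bx))_{qk}| \le \max_{k,m}|b_{km}|$. Applying Hadamard's inequality to each $(Q-1)\times(Q-1)$ cofactor of $\Diff\bF(\bx)$ and then converting the entrywise bound on $\operatorname{adj}(\Diff\bF(\bx))$ into the chosen matrix norm (Frobenius dominating operator norm) produces the combinatorial prefactor $Q^{Q/2}\max_{k,m}|b_{km}|$. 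Dividing by the determinant bound of the previous paragraph yields $\|\Diff\bF(\bx)^{-1}\|\le\gamma$ uniformly on $\Omega$, and feeding this back into the integral identity of the first paragraph completes the argument.

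\textbf{Main obstacle.} The only delicate point is the constant tracking in the last step: choosing a consistent operator norm, matching the $Q^{Q/2}$ factor coming from Hadamard on $(Q-1)\times(Q-1)$ minors together with the $Q^2$ cofactor entries of the adjugate, and absorbing the resulting powers into the single-factor form $Q^{Q/2}\max_{k,m}|b_{km}|$ of the statement. Everything else is bookkeeping on objects (the Cauchy--Binet expansion of the Jacobian, the sign hypotheses, injectivity) that have already been assembled in the preceding sections.
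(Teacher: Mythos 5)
Your overall route coincides with the paper's: injectivity from \cref{thm:univalence_SPCT} combined with \cref{lem:general_IFT} to obtain a continuously differentiable inverse, convexity of $\bF(\Omega)$ to work along the segment $\by_t$ (the paper applies the mean value theorem to $\bF^{-1}$ at a single intermediate point, you use the integral form of the fundamental theorem of calculus --- a harmless, if anything more careful, variant), the decomposition $(\Diff\bF(\bx))^{-1}=\operatorname{adj}(\Diff\bF(\bx))/\det(\Diff\bF(\bx))$, the same lower bound for $|\det\Diff\bF(\bx)|$ obtained by factoring out $\min_{\beta,\bx\in\Omega}\prod_{i\in\beta}\zeta_i(\bx)/\prod_{q}\langle\bds^{[q]},\bzeta(\bx)\rangle$ under the sign hypothesis and reassembling $|\det(SB^{\tra})|$ via \cref{lem:cauchy_binet}, and the same entrywise bound $|(\Diff\bF(\bx))_{qk}|\le\max_{k,m}|b_{km}|$ coming from the convex-combination structure of the Jacobian entries.

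The one genuine divergence is the adjugate estimate, and there your constant does not come out as you claim. The paper cites Mirsky's Frobenius-norm estimate for the adjugate and writes $\|\operatorname{adj}(\Diff\bF(\bx_t))\|_F\le Q^{(Q-2)/2}\|\Diff\bF(\bx_t)\|_F\le Q^{(Q-2)/2}\,Q\max_{k,m}|b_{km}|$, which produces exactly the prefactor $Q^{Q/2}\max_{k,m}|b_{km}|$ in $\gamma$. Your route --- Hadamard's inequality on each $(Q-1)\times(Q-1)$ cofactor and then summing the $Q^{2}$ entries --- gives $\|\operatorname{adj}(\Diff\bF(\bx))\|_F\le Q\,(Q-1)^{(Q-1)/2}\bigl(\max_{k,m}|b_{km}|\bigr)^{Q-1}$. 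Because each cofactor is a determinant of order $Q-1$, any entrywise bound is necessarily homogeneous of degree $Q-1$ in $\max_{k,m}|b_{km}|$ and cannot be ``absorbed'' into the degree-one expression $Q^{Q/2}\max_{k,m}|b_{km}|$ unless $Q=2$; so for $Q=2$ (the DECT setting of the numerics) your argument does reproduce the stated $\gamma$, but for general $Q$ the final collapse you assert is a gap --- you must either accept the larger constant in the statement or switch to the adjugate-norm estimate the paper quotes. (Be aware, too, that the same homogeneity consideration applies to the paper's quoted degree-one form of Mirsky's inequality; the general form is $\|\operatorname{adj}(A)\|_F\le Q^{-(Q-2)/2}\|A\|_F^{Q-1}$, which again yields $(\max_{k,m}|b_{km}|)^{Q-1}$, so the exact constant as printed is really a $Q=2$ statement.) Everything else in your proposal matches the paper's proof, including the use of the derivation inside \cref{thm:local_homeo} for the determinant expansion.
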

\begin{proof}
    By \cref{thm:univalence_SPCT}, we obtain that $\bF$ is globally injective in $\Omega$. Hence, \cref{lem:general_IFT} yields that
    inverse mapping $\bF^{-1}$ is continuously differentiable. For any $\bx_1,\bx_2\in\Omega$, put $\bF(\bx_1) = \by_1$, $\bF(\bx_2) = \by_2$. Since $\bF(\Omega)$ is convex, then using mean value theorem to $\bF^{-1}$, we obtain
    \begin{align*}
        \|\bx_1 - \bx_2\| &= \|\bF^{-1}(\by_1) - \bF^{-1}(\by_2) \| \le \| \Diff\bF^{-1}(\by_t) (\by_1 - \by_2)\|\\
        &= \sqrt{\sum_{q=1}^Q \left( \Diff\bF^{-1}(\by_t)[q,\langle Q \rangle] (\by_1 - \by_2) \right)^2} \\
        &\le \sqrt{\sum_{q=1}^Q  \|\Diff\bF^{-1}(\by_t)[q,\langle Q \rangle]\|^2  \|\by_1 - \by_2\|^2}\\
        &= \| \Diff\bF^{-1}(\by_t) \|_{\text{F}} \|\by_1 - \by_2\| \\
        &= \| \left(\Diff\bF(\bx_t)\right)^{-1} \|_{\text{F}} \|\by_1 - \by_2\| \\    
        &= |\det(\Diff\bF(\bx_t))|^{-1} \| \text{adj}(\Diff\bF(\bx_t))\|_{F} \|\by_1 - \by_2\|,
    \end{align*}
    where $\by_t = t\by_1 +(1-t)\by_2$ with $0\le t\le 1$, $\bx_t = \bF^{-1}(\by_t)$, and $\text{adj}(\Diff\bF(\bx_t))$ denotes the adjugate matrix of $\Diff\bF(\bx_t)$. Using an estimate for the Frobenius norm of adjugate matrix \cite{Mirsky1956NormAdj}, we obtain
    \begin{align*}
        \| \text{adj}(\Diff\bF(\bx_t))\|_{F} \le Q^{\frac{Q-2}{2}} \|\Diff\bF(\bx_t) \|_{F} \le Q^{\frac{Q-2}{2}} Q \max_{k,m} |b_{km}| =  Q^{\frac{Q}{2}} \max_{k,m} |b_{km}|.
    \end{align*}
    The last inequality comes from the observation that the Jacobian of $\bF$ can be rewritten as 
    \begin{align*}
        \Diff \bF(\bx) = -\tilde{S}(\bx) B^{\tra},
    \end{align*}
    where 
    \begin{equation*}
        \tilde{S}(\bx) := \begin{bmatrix}
            \tilde{s}^{[1]}_1(\bx) & \ldots & \tilde{s}^{[1]}_M(\bx)\\
            \vdots &  & \vdots \\
            \tilde{s}^{[Q]}_1(\bx) & \ldots & \tilde{s}^{[Q]}_M(\bx)
        \end{bmatrix}
          \in\Real^{Q\times M} 
    \quad\text{with}\quad 
    \tilde{s}^{[q]}_m(\bx) := \frac{s^{[q]}_m \zeta_m(\bx) }{\sum_{m=1}^M 
    s^{[q]}_m \zeta_m(\bx)}. 
    \end{equation*}
    For the determinant of $\Diff \bF(\bx)$, by the derivation in \cref{thm:local_homeo}, we have
    \begin{multline*}
       | \det\bigl(\Diff \bF(\bx)\bigr) |= |\sum_{\#\beta=Q} \frac{\prod_{i \in \beta}\zeta_{i}(\bx)}{\prod_{q = 1}^Q \langle \bds^{[q]}, \bzeta(\bx) \rangle}
    \det(S[\langle Q \rangle,\beta]) 
    \det(B[\langle Q \rangle,\beta])| \\
    \ge \min\limits_{\beta\subset \langle n\rangle,\bx\in\Omega} \frac{\prod_{i \in \beta}\zeta_{i}(\bx)}{\prod_{q = 1}^Q \langle \bds^{[q]}, \bzeta(\bx) \rangle} |\sum_{\#\beta=Q}
    \det(S[\langle Q \rangle,\beta]) 
    \det(B[\langle Q \rangle,\beta])| \\
    = \min\limits_{\beta\subset \langle n\rangle,\bx\in\Omega} \frac{\prod_{i \in \beta}\zeta_{i}(\bx)}{\prod_{q = 1}^Q \langle \bds^{[q]}, \bzeta(\bx) \rangle} | \det\bigl( SB^{\tra} \bigr) |.
    \end{multline*}
    Hence we obtain the desired estimate.
\end{proof}

Note that, for DECT, the assumption that $F(\Omega)$ is convex is no longer required in \cref{thm:stability_general}. According to \cref{remark:2D_diffeo}, mapping $\bF$ is a diffeomorphism on $\Real^2$. Hence, for any point $\bF(\bx_1)$ and $\bF(\bx_2)$, the line segment connecting the two points lies in the range of $\bF$. Thus the mean value theorem can be used.

\section{Numerical studies}\label{sec:Numerical}

We perform quantitative studies in DECT with a parallel-beam geometry below to demonstrate numerically the conditions discussed, in which the ordinary Newton method is used for solving the discrete nonlinear system in  \cref{eq:nonlinear_systems_original}.  As shown in \cref{fig:truth_phantoms}, digital Forbild head phantom and patient-torso phantom are used in the studies, each of which consists of two basis materials $\bdf_k$ of water and bone.  MACs $b_{km}$ at energy $m$ for materials water and bone obtained from the National Institute of Standard Technology (NIST) database \cite{NIST_data}. Using \cref{eq:linear_part}, we can readily obtain truth basis sinograms $\bx_j^*$ from the truth basis images. Using software SpectrumGUI, an open-source X-ray spectrum simulator \cite{spectrum_GUI}, we first generate a pair of spectra to mimic the low-kV (\eg, 80-kV) and high-kV (\eg, 140-kV) spectra of a typical clinical CT scanner, as shown in \cref{fig:normalized_spectra}{\color{red}a}. We call it Spectra I. Additionally, we create another pair of spectra named Spectra II, consisting of the 80-kV spectrum and a filtered 140-kV spectrum, as shown in \cref{fig:normalized_spectra}{\color{red}b}. Specifically, the 80-kV spectrum is identical to the 80-kV spectrum in \cref{fig:normalized_spectra}{\color{red}a}; and the filtered 140-kV spectrum is obtained applying a copper filter of 1-mm width to the 140-kV spectrum in \cref{fig:normalized_spectra}{\color{red}a}.

We use $S_1$ and $S_2$ to denote the values of the two pairs of spectra and $B$ to denote the values of MACs obtained, as shown in  Appendix \ref{appendix:data}. 
By direct validation, $\{S_1, B\}$ and $\{S_2, B\}$ satisfy the conditions in \cref{thm:local_homeo} and \cref{thm:DECT_proper}. Using \cref{thm:DECT_homeo} and \cref{thm:univalence_DECT}, it follows that for any measured data, nonlinear system \cref{eq:nonlinear_systems_map} always has a unique solution and the $\Diff \bF(\bx)$ is always invertible. 
The two sets of low- and high-kV spectra shown in  \cref{fig:normalized_spectra}{\color{red}a} and \cref{fig:normalized_spectra}{\color{red}b} are used in the studies, respectively, involving the Forbild and torso phantoms, respectively.

\begin{figure}[htbp]
    \centering
    \includegraphics[width=1.\textwidth]{./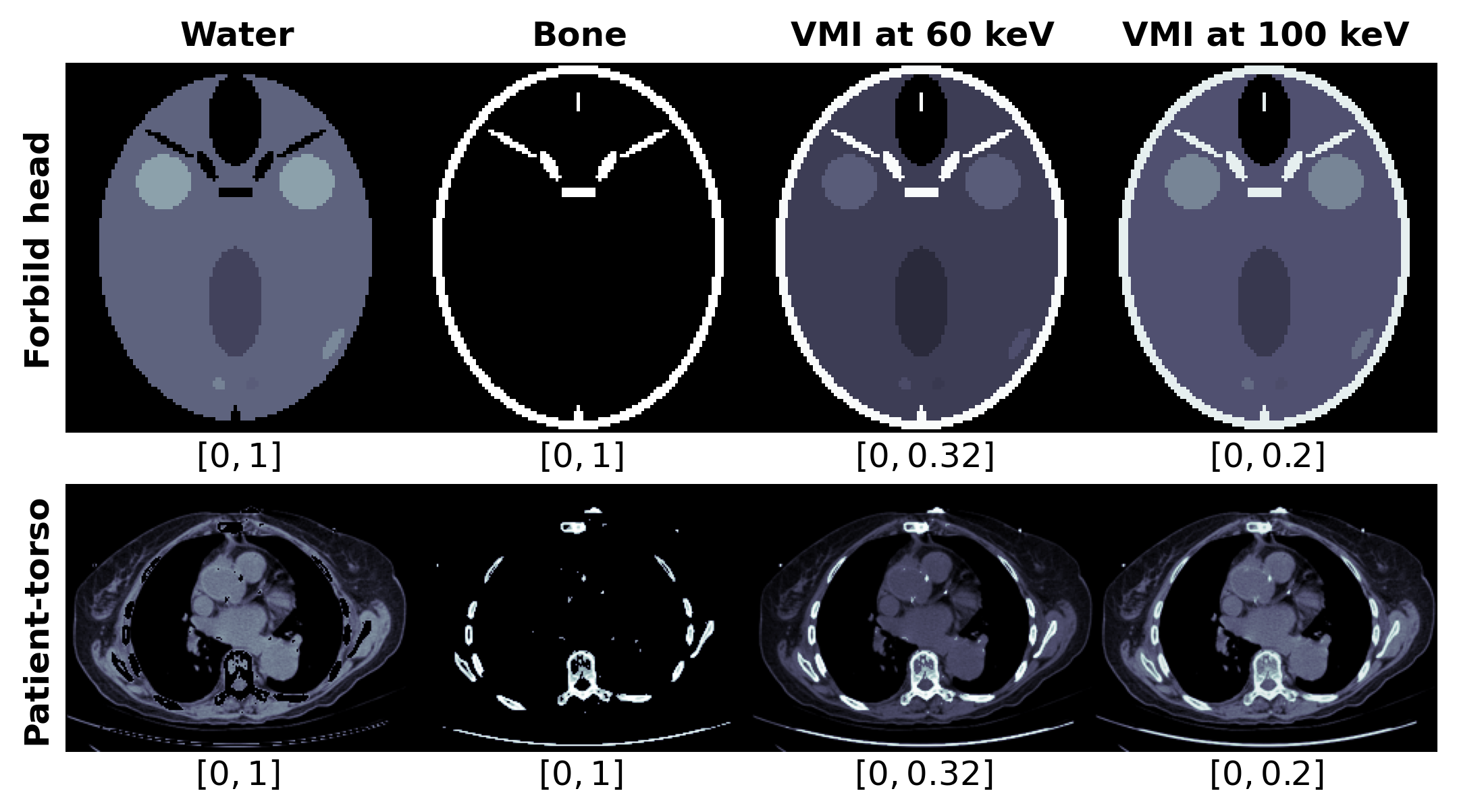}
    \caption{Truth basis images of water (column 1) and bone (column 2) and truth VMIs at energies of 60 keV (column 3) and 100 keV (column 4), respectively, of the Forbild head phantom (row 1) and the patient-torso phantom (row 2).}
    \label{fig:truth_phantoms}
\end{figure}

\begin{figure}[htbp]
    \centering
    {\includegraphics[width=0.49\textwidth]{./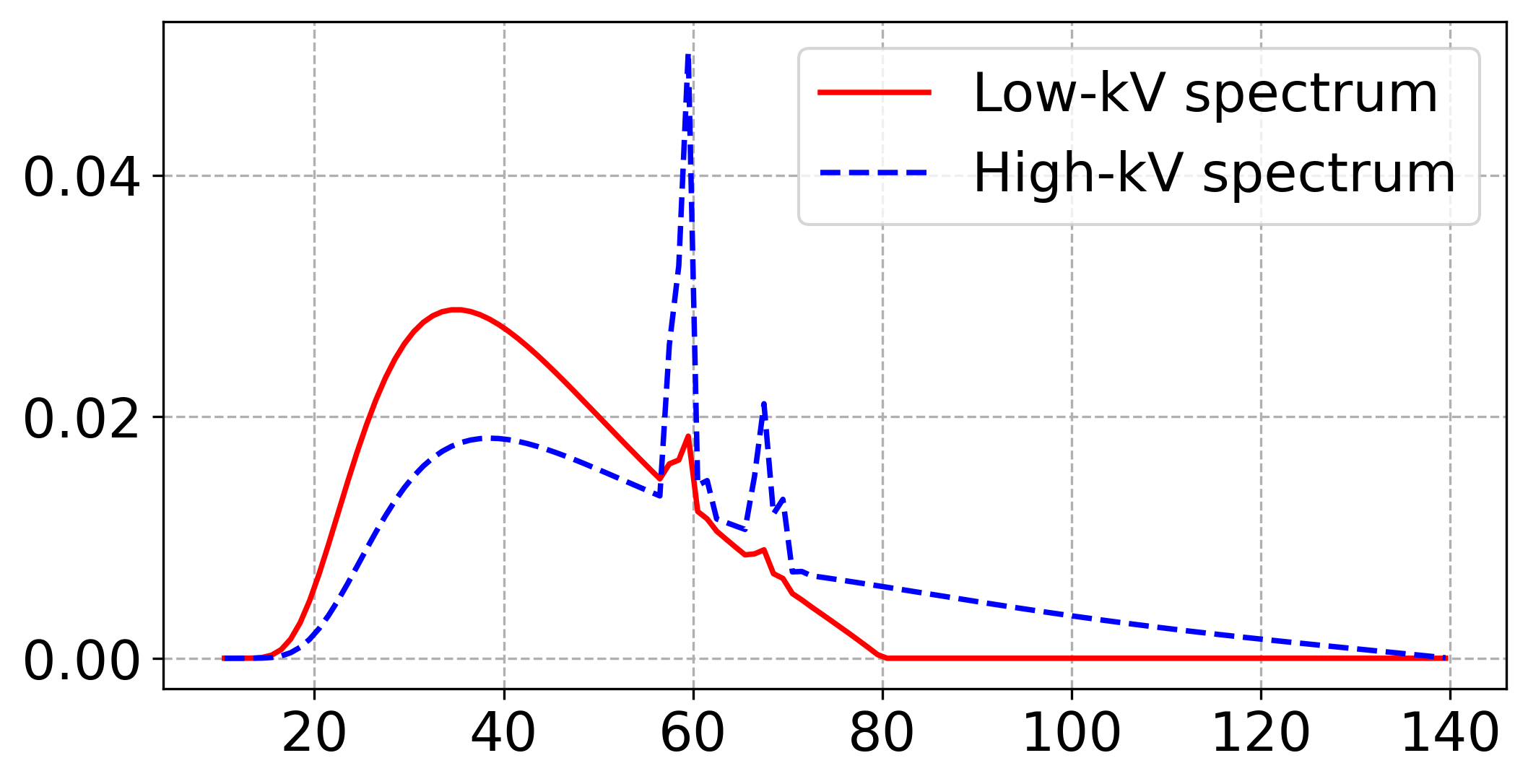}}
    {\includegraphics[width=0.49\textwidth]{./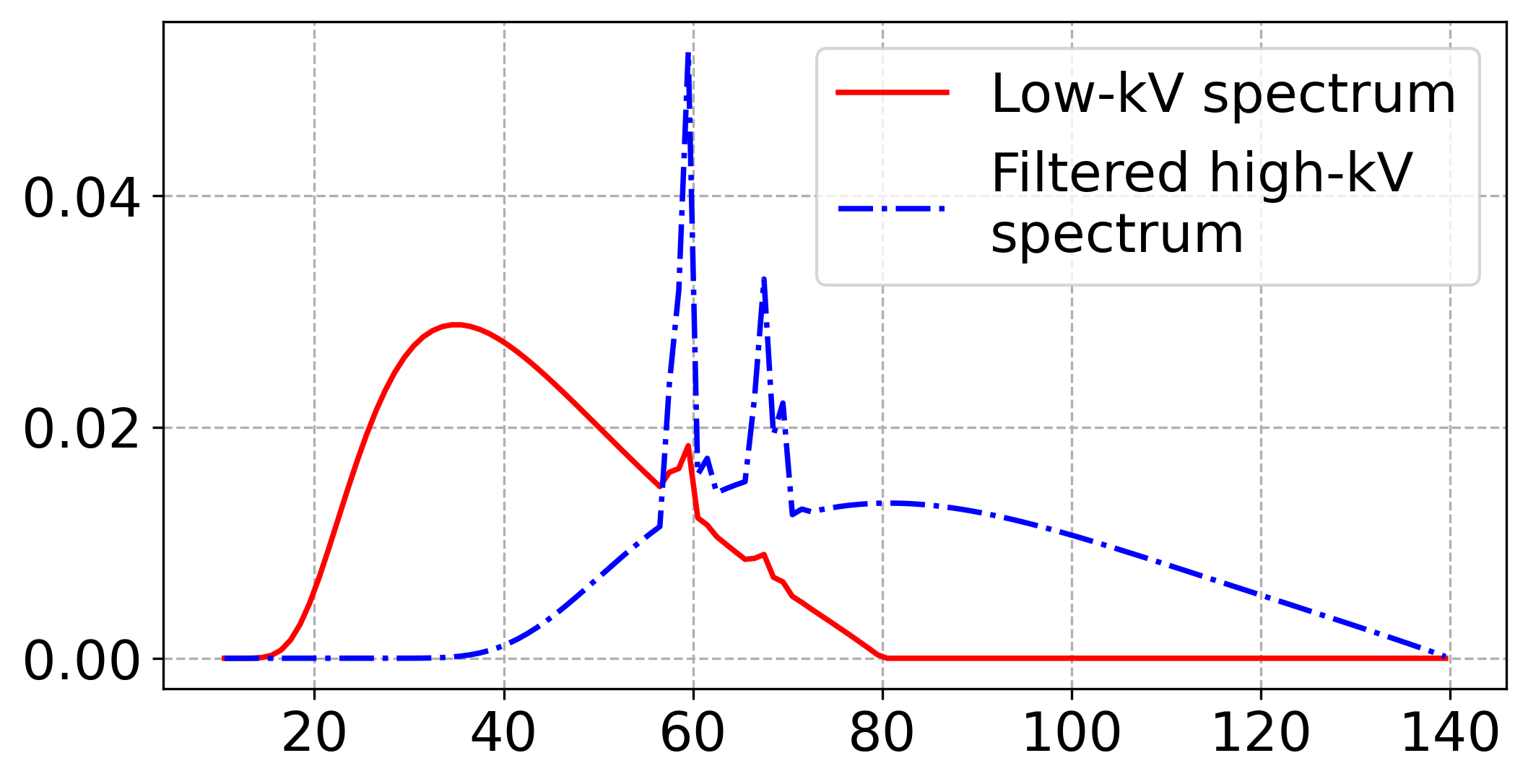}}
    \hspace{3cm} (a) \hspace{5.5cm} (b)
    \caption{(a) Spectra I: low-kV ({\color{red} red}, solid) and high-kV ({\color{blue} blue}, dashed) spectra; and (b) Spectra II: low-kV spectrum ({\color{red} red}, solid) identical to the low-kV spectrum in (a) and filtered high-kV spectrum ({\color{blue} blue}, dashed dot).}
    \label{fig:normalized_spectra}
\end{figure}

For a digital phantom, we generate noiseless data $\bdg$ by using its water and bone basis images $\bdf_k$, MACs of water and bone, and a pair of spectra in \cref{fig:normalized_spectra} in \cref{eq:nonlinear_systems_original}. From the data generated, without loss of generality, we solve iteratively the nonlinear system in \cref{eq:nonlinear_systems_map} by using the ordinary Newton method, which includes the iterative procedure \cite{Deuflhard_11} 
\begin{align}\label{eq:ordinary_newton}
    - \Diff \bF(\bx^n)\Delta \bx^n = \bF(\bx^n) - \bdg, \quad \bx^{n+1} = \bx^n + \Delta \bx^n,
\end{align}
where $n$ indicates the iteration number. We note that \cref{eq:nonlinear_systems_map} (or, equivalently, \cref{eq:nonlinear_systems_original}) can be solved independently for each ray and thus can simultaneously be solved in parallel for multiple rays.

We use the metric below to assess the recovery accuracy in the numerical studies:
\begin{align*}
    \text{RE}_{\bx}^{n} := \frac{\sum_{j=1}^J \|\bx_j^n - \bx_j^*\|^2}{\sum_{j=1}^J \|\bx_j^*\|^2},
\end{align*}
where $\bx_j^n$ denotes the basis sinogram estimated for the $j$-th ray at the $n$-th iteration, and $\bx_j^*$ the corresponding truth basis sinogram. The metric is used to 
reveal the numerical convergence of the ordinary Newton method in terms of estimation of the basis sinograms through solving nonlinear system \cref{eq:nonlinear_systems_map}.
Without loss of generality, we choose zero vector as the initial point.

Furthermore, to demonstrate numerically the stability of the solution to \cref{eq:nonlinear_systems_map}, we generate noisy data by adding white Gaussian noise to  noiseless data $\bdg$. Observing the property of a solution to \cref{eq:nonlinear_systems_map}, we can conclude that for any ray $j$,
\begin{align}\label{eq:estimate_noisy_data}
    \|\widetilde{\bx}_j^n - \bx_j^*\|&\le \|\widetilde{\bx}_j^n - \bF^{-1}(\widetilde{\bdg}_j)\| + 
    \|\bF^{-1}(\widetilde{\bdg}_j) - \bx_j^*\|  \nonumber \\ 
    &\le \underbrace{\|\widetilde{\bx}_j^n - \bF^{-1}(\widetilde{\bdg}_j)\| }_{\textit{Error 1}} + \gamma \underbrace{\| \widetilde{\bdg}_j - \bdg_j\|}_{\textit{Error 2}}, 
\end{align}
where $\widetilde{\bx}_j^n$ denotes the basis sinogram estimated for this ray at the $n$-th iteration from noisy data $\widetilde{\bdg}_j$, and $\bF^{-1}(\widetilde{\bdg}_j)$ the sinogram corresponding to $\widetilde{\bdg}_j$. 
Constant $\gamma$ is finite based upon the stability result in \cref{thm:stability_general}.  Term \textit{Error 1} is determined by the algorithm used for solving the nonlinear system, whereas \textit{Error 2} is the noise level of measured data. 

\subsection{Numerical study with the Forbild head phantom}\label{subsec:test1}

We first perform a numerical study with the 2D Forbild head phantom consisting of truth basis images of water and bone presented on an array of $128\!\times\!128$ identical pixels of square shapes covering an area $[-5, 5]\!\times\![-5, 5]$ cm$^2$,  as shown in row 1 of \cref{fig:truth_phantoms}. Using the truth basis images in \cref{eq:attenuationco-DD}, we create the truth VMIs at energies 60 keV and 100 keV shown also in row 1 of \cref{fig:truth_phantoms}. For a geometrically-consistent scan configuration considered with 181 parallel-ray projections uniformly sampled on $[-7.05, 7.05]$ cm at each of the 180 views uniformly distributed over 180$^\circ$, we can readily obtain the truth basis sinograms in column 1 of \cref{fig:sinogram_forbild} by using \cref{eq:sinoCC_formula}.  Furthermore, 
with each pair of the spectra shown in \cref{fig:normalized_spectra}, we generate a set of the noiseless data by plugging the scan geometric parameters, the low- and high-kV spectra, MACs obtained from the NIST database, and truth basis images in row 1 of \cref{fig:truth_phantoms} into  \cref{eq:dd_formula} and \cref{eq:linear_part}.
  
Using the Newton method in \cref{eq:ordinary_newton}, we solve the DD-data model in \cref{eq:nonlinear_systems_map} with each of the two sets of noiseless data generated for obtaining basis sinograms.  Metrics $\text{RE}_{\bx}^{n}$ are first displayed as functions of iteration number in \cref{fig:metric_error_forbild}{\color{red}a} for the two noiseless data sets. It can be observed that metrics numerically converge to the level of double-floating precision of the computer used. The basis sinograms obtained at iteration 10$^2$ are shown in columns 2 and 3 of \cref{fig:sinogram_forbild}.

\begin{figure}[htbp]
    \centering
    \includegraphics[width=1.\textwidth]{./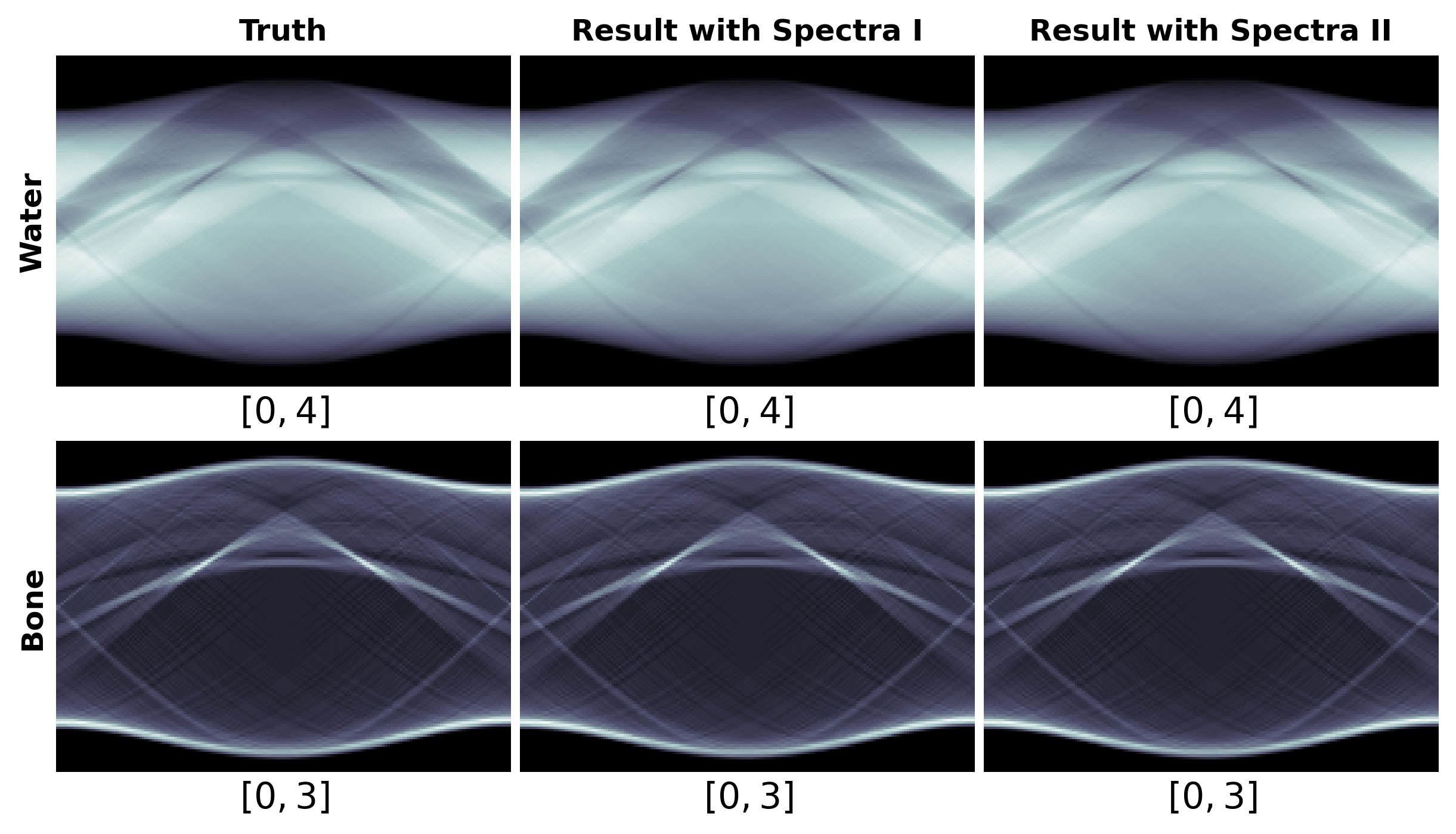}
    \caption{Truth (column 1) and estimated (columns 2 and 3) basis sinograms of water (row 1) and bone (row 2) of the Forbild head phantom from noiseless data. Columns 2 and 3 are obtained with spectral pairs in \cref{fig:normalized_spectra}{\color{red}a} and \cref{fig:normalized_spectra}{\color{red}b}, respectively.} 
    \label{fig:sinogram_forbild}
\end{figure}

\begin{figure}[htbp]
   \centering
   {\includegraphics[width=.49\textwidth]{./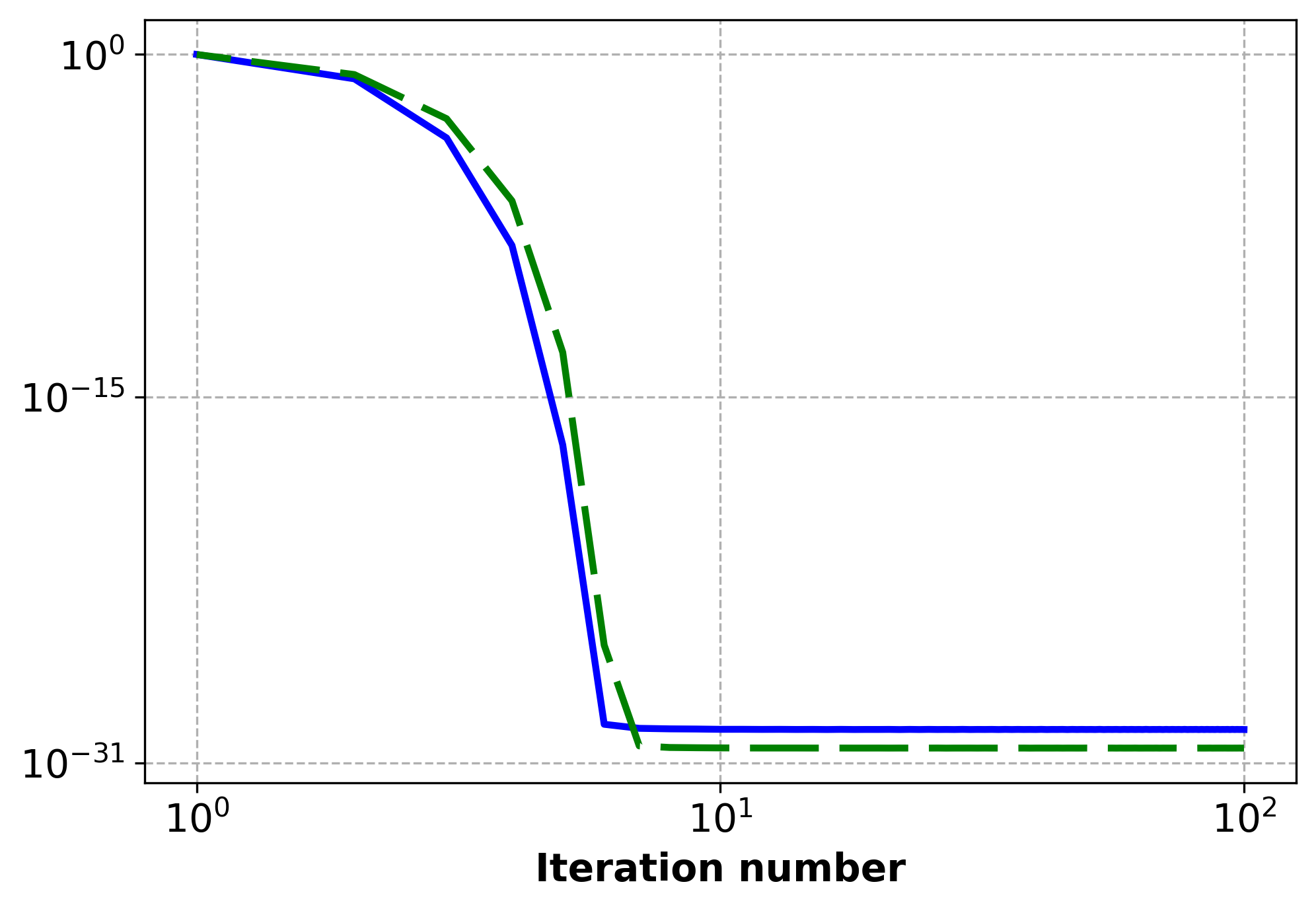}}
    {\includegraphics[width=.49\textwidth]{./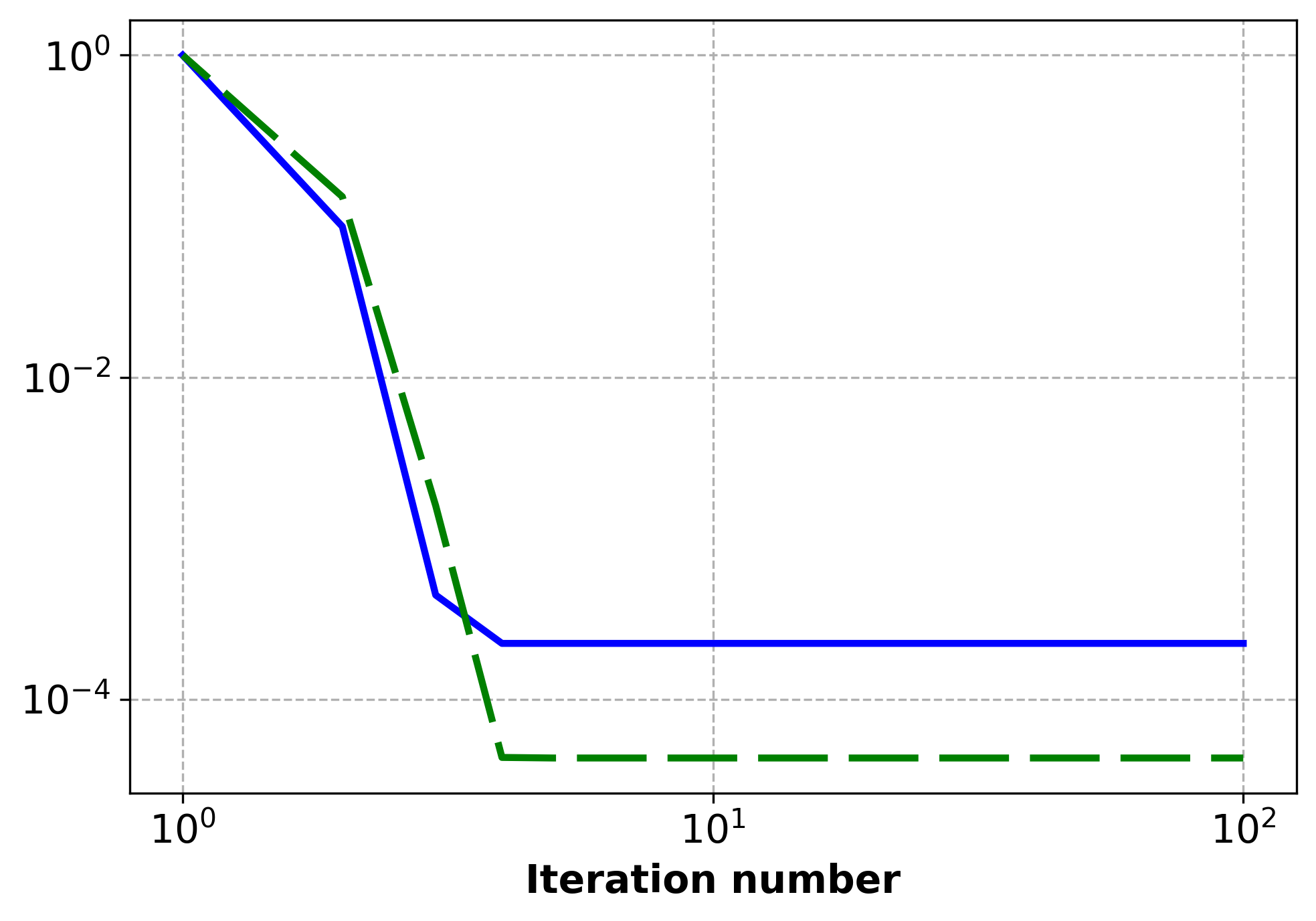}}\\
    \hspace{0.3cm} (a) \hspace{5.3cm} (b)
   \caption{Metrics $\text{RE}_{\bx}^{n}$ plotted in a log-log scale as functions of the iteration number of the ordinary Newton method obtained with the spectral pairs in \cref{fig:normalized_spectra}{\color{red}a} (solid) and \cref{fig:normalized_spectra}{\color{red}b} (dashed), respectively, from noiseless data (a) and noisy data (b) of the Forbild head phantom. 
   }
    \label{fig:metric_error_forbild}
\end{figure}

The numerical results of convergence metric $\text{RE}_{\bx}^{n}$ and basis sinograms estimated reveal that under the conditions discussed, the DD-data model in \cref{eq:nonlinear_systems_map}, can accurately be solved by use of the ordinary Newton method.  Furthermore, it can be observed in \cref{fig:metric_error_forbild} that the pair of spectra in \cref{fig:normalized_spectra}{\color{red}b} with a degree of overlapping lower than that of the other pair of spectra in \cref{fig:normalized_spectra}{\color{red}a} may be beneficial to solving the nonlinear system \cref{eq:nonlinear_systems_map}. 

We also apply the filtered backprojection (FBP) algorithm to reconstructing the basis images from the corresponding basis sinograms estimated. In \cref{fig:recon_forbild}, we display the basis images reconstructed for both pairs of spectra, and also the VMIs obtained at energies 60 keV and 100 keV with \cref{eq:attenuationco}.

\begin{figure}[htbp]
    \centering
    \includegraphics[width=0.85\textwidth]{./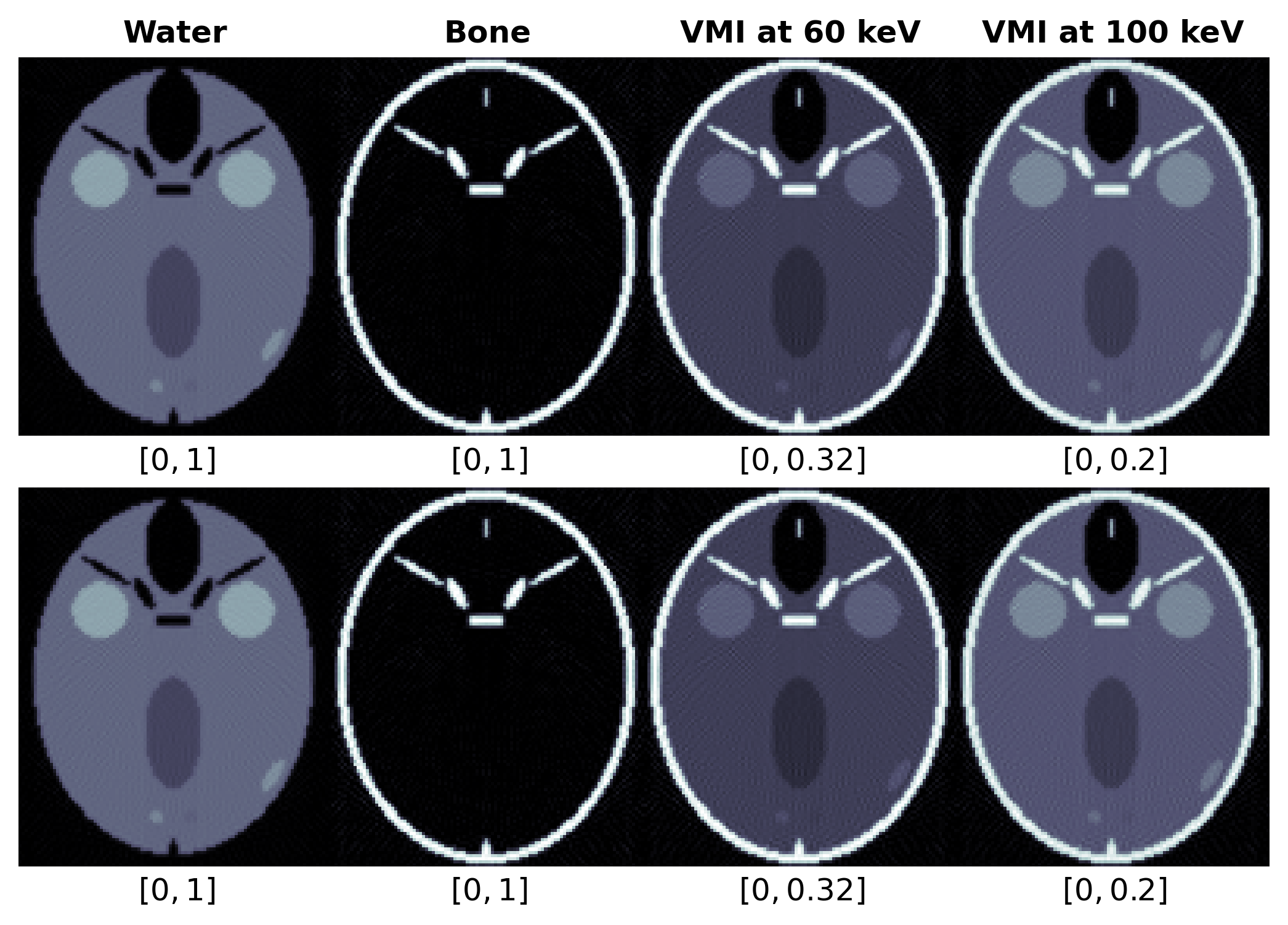}
    \caption{Basis images of water (column 1) and bone (column 2) and VMIs at energies 60 keV (column 3) and 100 keV (column 4) reconstructed from noiseless data, respectively, with the spectral pairs in \cref{fig:normalized_spectra}{\color{red}a}   (row 1) and \cref{fig:normalized_spectra}{\color{red}b} (row 2) of the Forbild head phantom, respectively.}
    \label{fig:recon_forbild}
\end{figure}

To study the numerical stability, two data sets containing Gaussian noise with signal-to-noise ratio (SNR) levels of 27.4 dB and 27.1 dB were generated for the two pairs of spectra in \cref{fig:normalized_spectra}. As shown in \cref{fig:metric_error_forbild}{\color{red}b}, metrics $\text{RE}_{\bx}^{n}$ converge to constants that are determined largely by the levels of data noise. The basis sinograms obtained at iteration 10$^2$ from the noisy data are shown in \cref{fig:sinogram_forbild_noisy}. In \cref{fig:recon_forbild_noisy}, we display basis images and VMIs obtained at 60 keV and 100 keV from the noisy data. These results reveal that basis sinograms can stably be estimated numerically.

\begin{figure}[htbp]
    \centering
    \includegraphics[width=0.68\textwidth]{./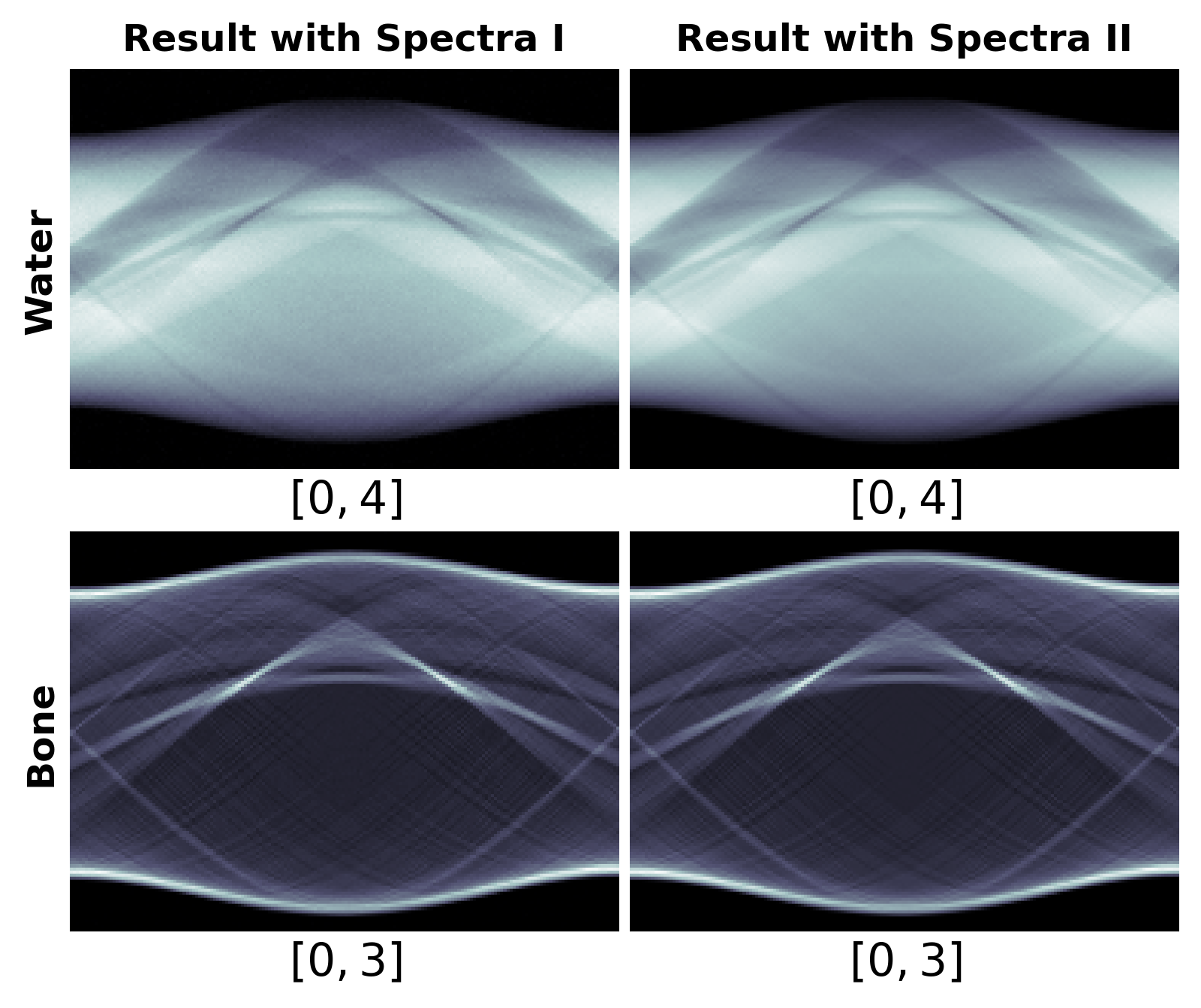}
    \caption{Estimated basis sinograms of water (row 1) and bone (row 2) of the Forbild head phantom from noisy data. Columns 1 and 2 are obtained with spectral pairs in \cref{fig:normalized_spectra}{\color{red}a} and \cref{fig:normalized_spectra}{\color{red}b}, respectively. The corresponding truth basis sinograms are displayed in column 1 of \cref{fig:sinogram_forbild}.} 
    \label{fig:sinogram_forbild_noisy}
\end{figure}

\begin{figure}[htbp]
    \centering
    \includegraphics[width=0.85\textwidth]{./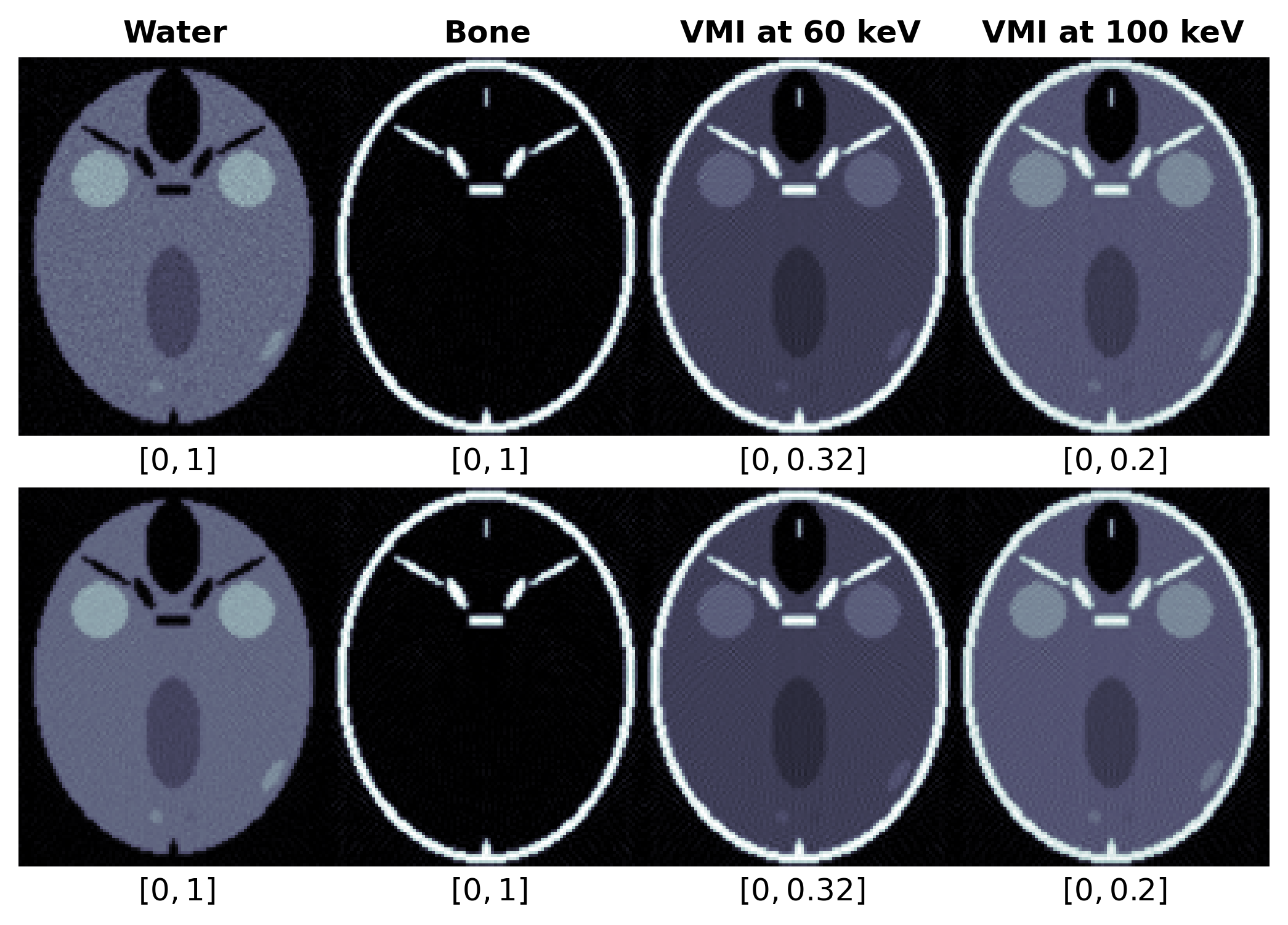}
    \caption{Basis images of water (column 1) and bone (column 2) and VMIs at energies 60 keV (column 3) and 100 keV (column 4) reconstructed from noisy data, respectively, with the spectral pairs in \cref{fig:normalized_spectra}{\color{red}a}   (row 1) and \cref{fig:normalized_spectra}{\color{red}b} (row 2) of the Forbild head phantom, respectively.}
    \label{fig:recon_forbild_noisy}
\end{figure}

\subsection{Numerical study with the patient-torso phantom}\label{subsec:test2}

We also perform a numerical study with a digital torso phantom, which  possesses realistic human torso anatomies because it was created from CT images of a patient. As shown in row 2 of  \cref{fig:truth_phantoms}, its truth basis images of water and bone are presented on an array of $256\times256$ identical pixels of square shapes covering an area $[-5, 5]\times [-5, 5]$ cm$^2$. Also, using the truth basis images in \cref{eq:attenuationco-DD}, we can obtain the truth VMIs  at energies 60 keV and 100 keV, which are shown also in row 2 of \cref{fig:truth_phantoms}. We assume a geometrically-consistent scan configuration with 360 parallel-ray projections uniformly sampled on $[-7.05, 7.05]$ cm at each of 360 views uniformly distributed over 180$^\circ$, and the truth basis sinograms in column 1 of \cref{fig:sinogram_torso}. Furthermore, with each pair of the spectra shown in \cref{fig:normalized_spectra}, we generate a set of the noiseless data by plugging the scan geometric parameters, the low- and high-kV spectra, MACs obtained from the NIST database, and truth basis images in row 2 of \cref{fig:truth_phantoms} into  \cref{eq:dd_formula} and \cref{eq:linear_part}.

Using the Newton method in \cref{eq:ordinary_newton}, we solve the DD-data model in \cref{eq:nonlinear_systems_map} with each of the two sets of noiseless data generated for obtaining basis sinograms.  Metrics $\text{RE}_{\bx}^{n}$ are first displayed as functions of iteration number in \cref{fig:metric_error_torso}{\color{red}a} for the two noiseless data sets. It can be observed that metrics numerically converge to the level of double-floating precision of the computer used. The basis sinograms obtained at iteration 10$^2$ are shown  in columns 2 and 3 of \cref{fig:sinogram_torso}.

The numerical results of convergence metric $\text{RE}_{\bx}^{n}$ and basis sinograms estimated reveal that under the conditions discussed, the DD-data model in \cref{eq:nonlinear_systems_map}, can accurately be solved by use of the ordinary Newton method.  Furthermore, it can be observed in \cref{fig:metric_error_torso} that the pair of spectra in \cref{fig:normalized_spectra}{\color{red}b} with a degree of overlapping lower than that of the other pair of spectra in \cref{fig:normalized_spectra}{\color{red}a} may be beneficial to solving the nonlinear system \cref{eq:nonlinear_systems_map}.

We also apply the FBP algorithm to reconstructing the basis images from the corresponding basis sinograms estimated. In \cref{fig:recon_torso}, we display the basis images reconstructed for both pairs of spectra, and also the VMIs obtained at energies 60 keV and 100 keV with \cref{eq:attenuationco}.

To study the numerical stability, two data sets containing Gaussian noise with SNR levels of 24.7 dB and 24.3 dB were generated for the two pairs of spectra in \cref{fig:normalized_spectra}, respectively. As shown in \cref{fig:metric_error_torso}{\color{red}b}, metrics $\text{RE}_{\bx}^{n}$ converge to constants that are determined largely by the levels of data noise. The basis sinograms obtained at iteration 10$^2$ from the noisy data are shown in \cref{fig:sinogram_torso_noisy}. In \cref{fig:recon_torso_noisy}, we display basis images and VMIs obtained at 60 keV and 100 keV from the noisy data. These results reveal that basis sinograms can stably be estimated numerically.

\begin{figure}[htbp]
    \centering
    \includegraphics[width=1.\textwidth]{./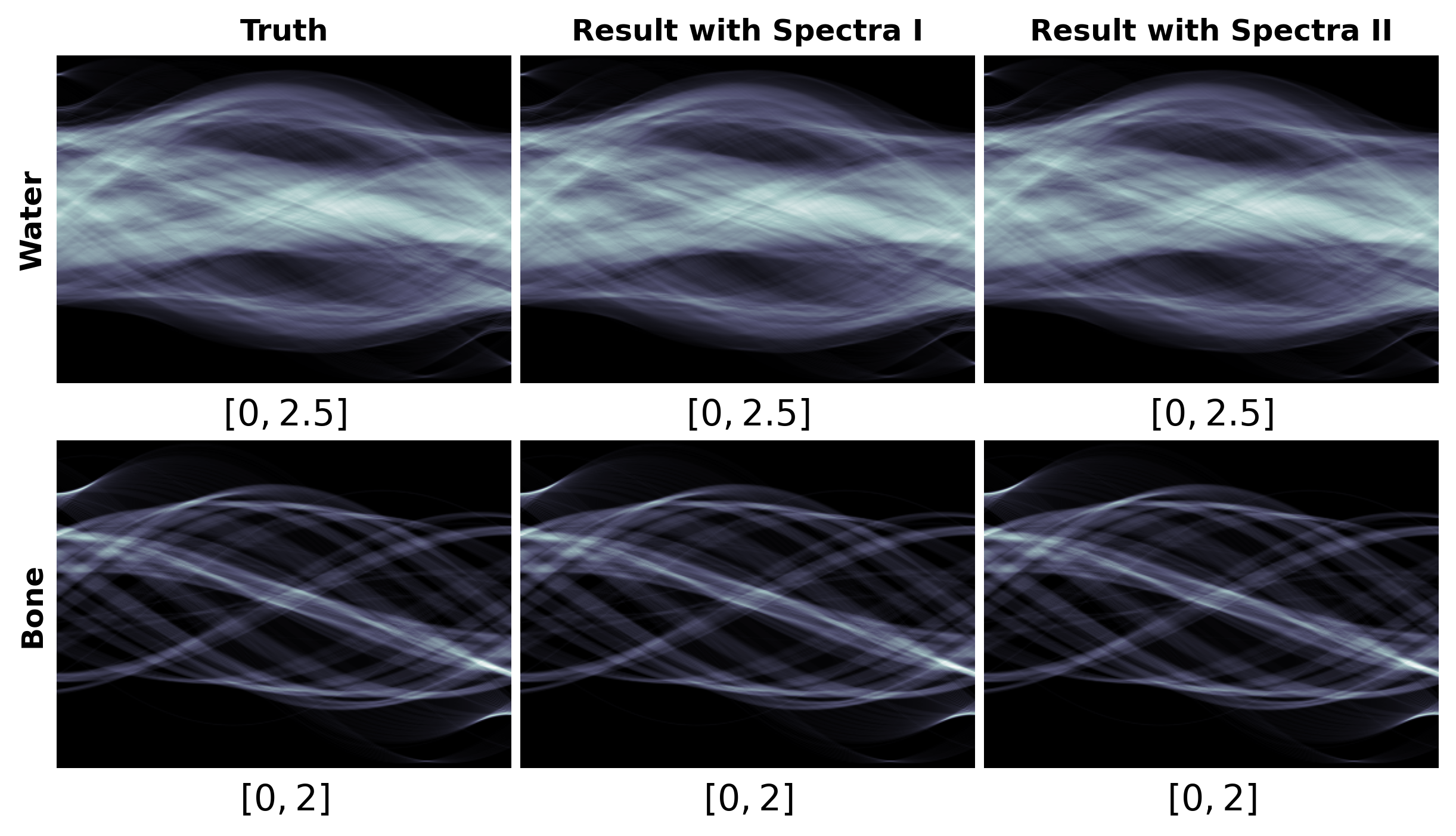}
      \caption{Truth (column 1) and estimated (columns 2 and 3) basis sinograms of water (row 1) and bone (row 2) of the torso phantom from noiseless data. Columns 2 and 3 are obtained with spectral pairs in \cref{fig:normalized_spectra}{\color{red}a} and \cref{fig:normalized_spectra}{\color{red}b}, respectively.} 
    \label{fig:sinogram_torso}
\end{figure}

\begin{figure}[htbp]
\centering
{\includegraphics[width=.49\textwidth]{./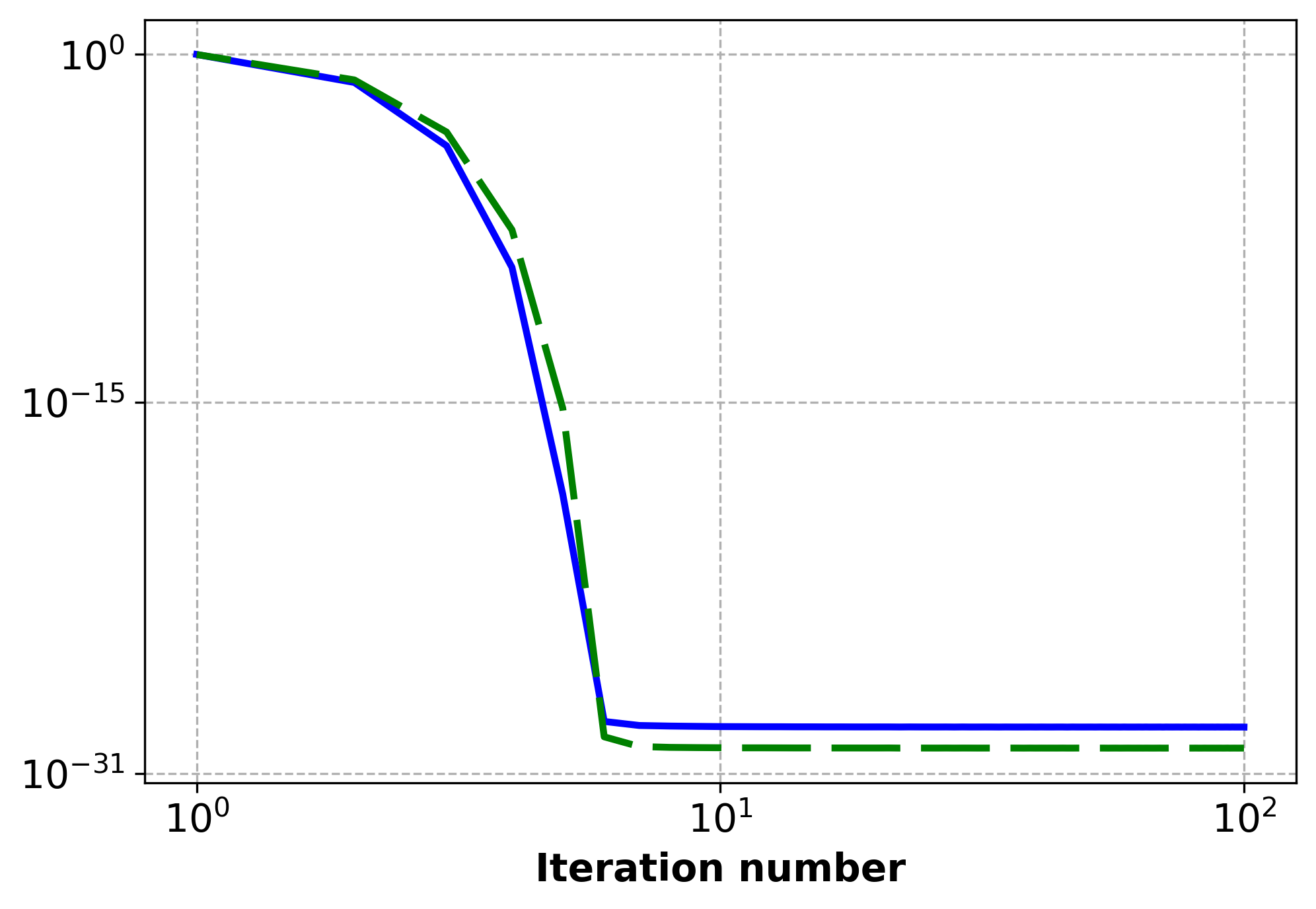}}
{\includegraphics[width=.49\textwidth]{./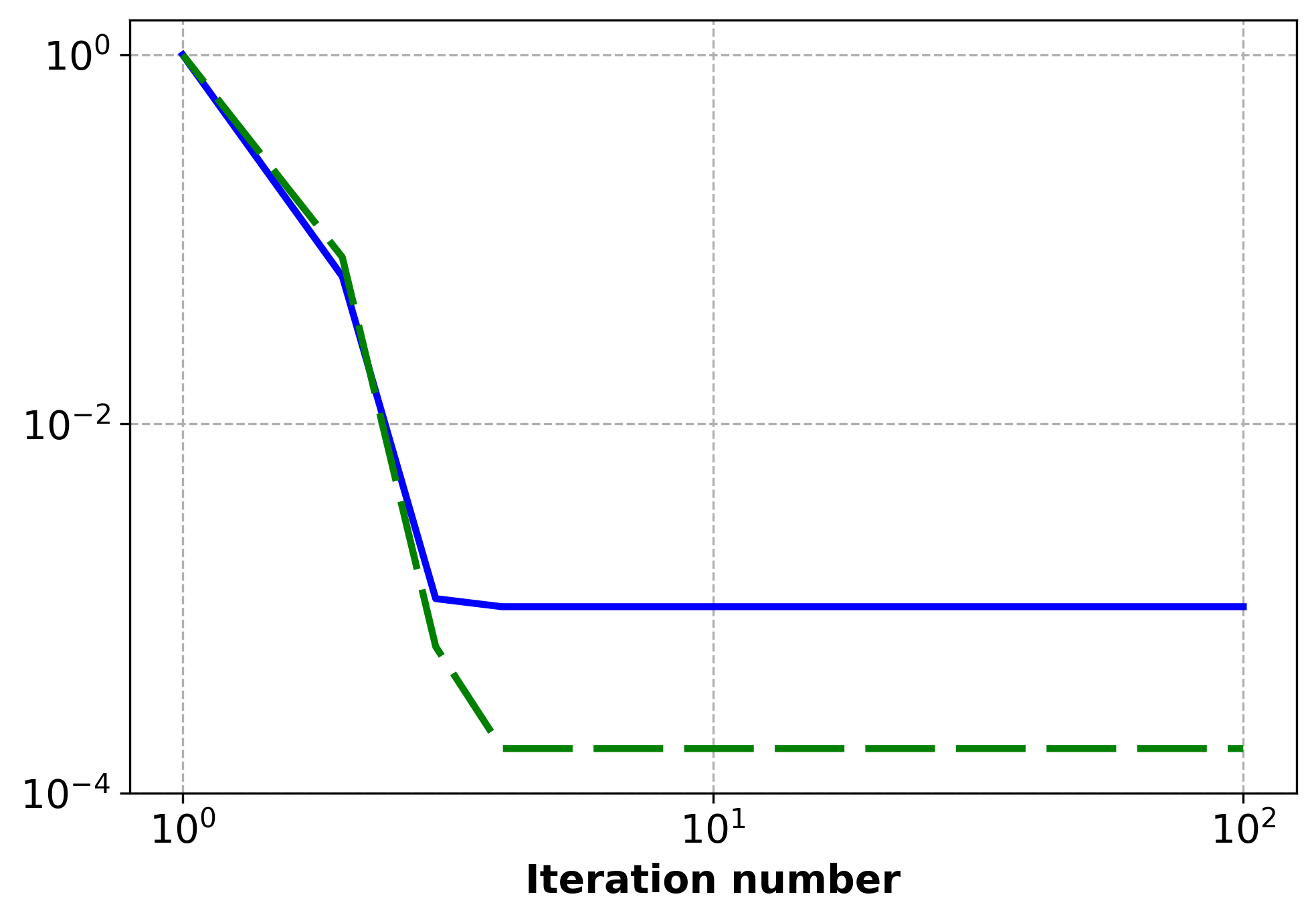}}\\
\hspace{1.0cm} (a) \hspace{5.5cm} (b)
\caption{Metrics $\text{RE}_{\bx}^{n}$ plotted in a log-log scale as functions of the iteration number of the ordinary Newton method obtained with the spectral pairs in \cref{fig:normalized_spectra}{\color{red}a} (solid) and \cref{fig:normalized_spectra}{\color{red}b} (dashed), respectively, from noiseless data (a) and noisy data (b) of the torso phantom. 
}
\label{fig:metric_error_torso}
\end{figure}

\begin{figure}[htbp]
    \centering
    \includegraphics[width=1.\textwidth]{./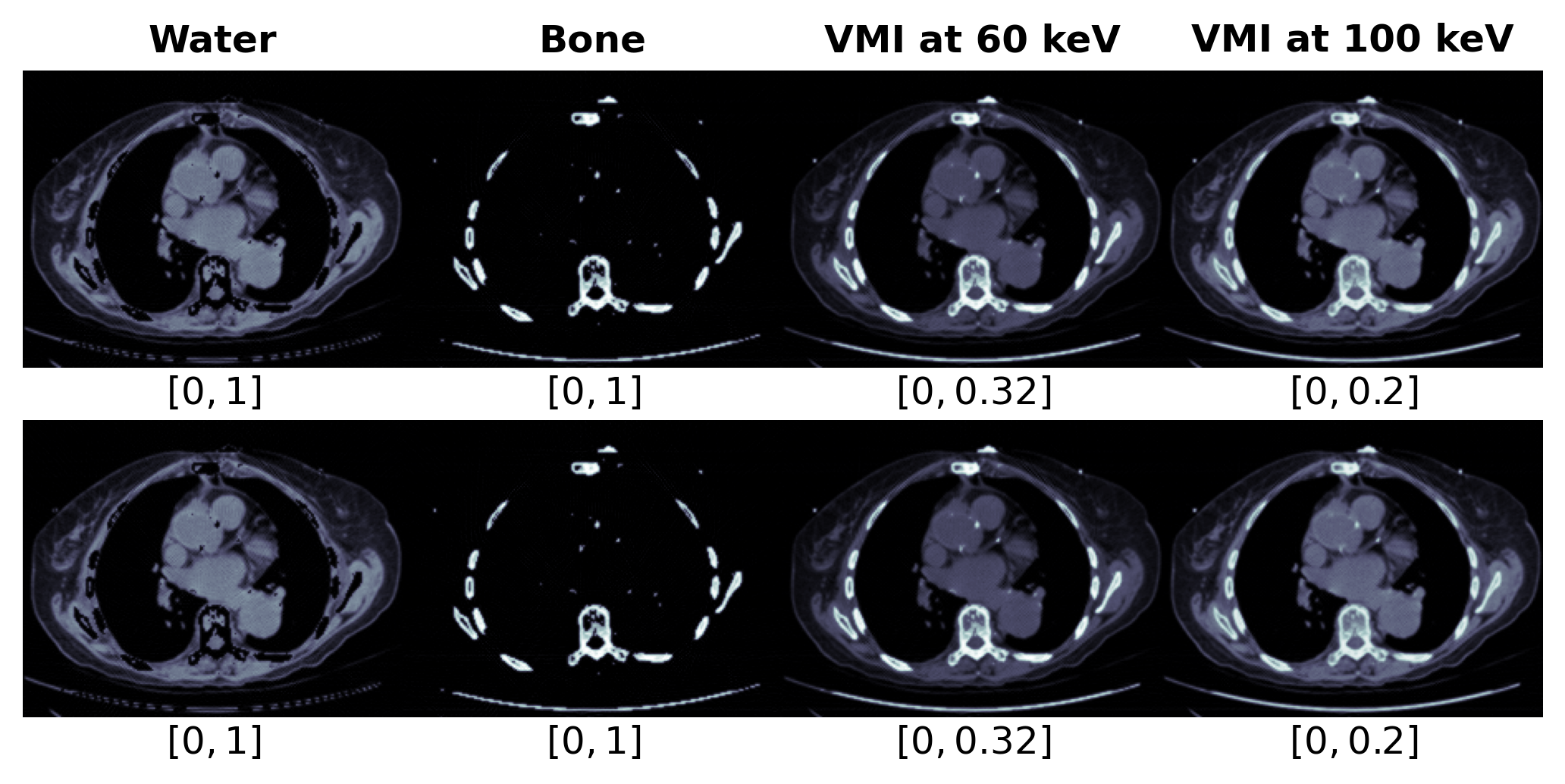}
     \caption{Basis images of water (column 1) and bone (column 2) and VMIs at energies 60 keV (column 3) and 100 keV (column 4) reconstructed from noiseless data, respectively, with the spectral pairs in \cref{fig:normalized_spectra}{\color{red}a}   (row 1) and \cref{fig:normalized_spectra}{\color{red}b} (row 2) of the torso phantom, respectively.} 
    \label{fig:recon_torso}
\end{figure} 

\begin{figure}[htbp]
    \centering
    \includegraphics[width=0.7\textwidth]{./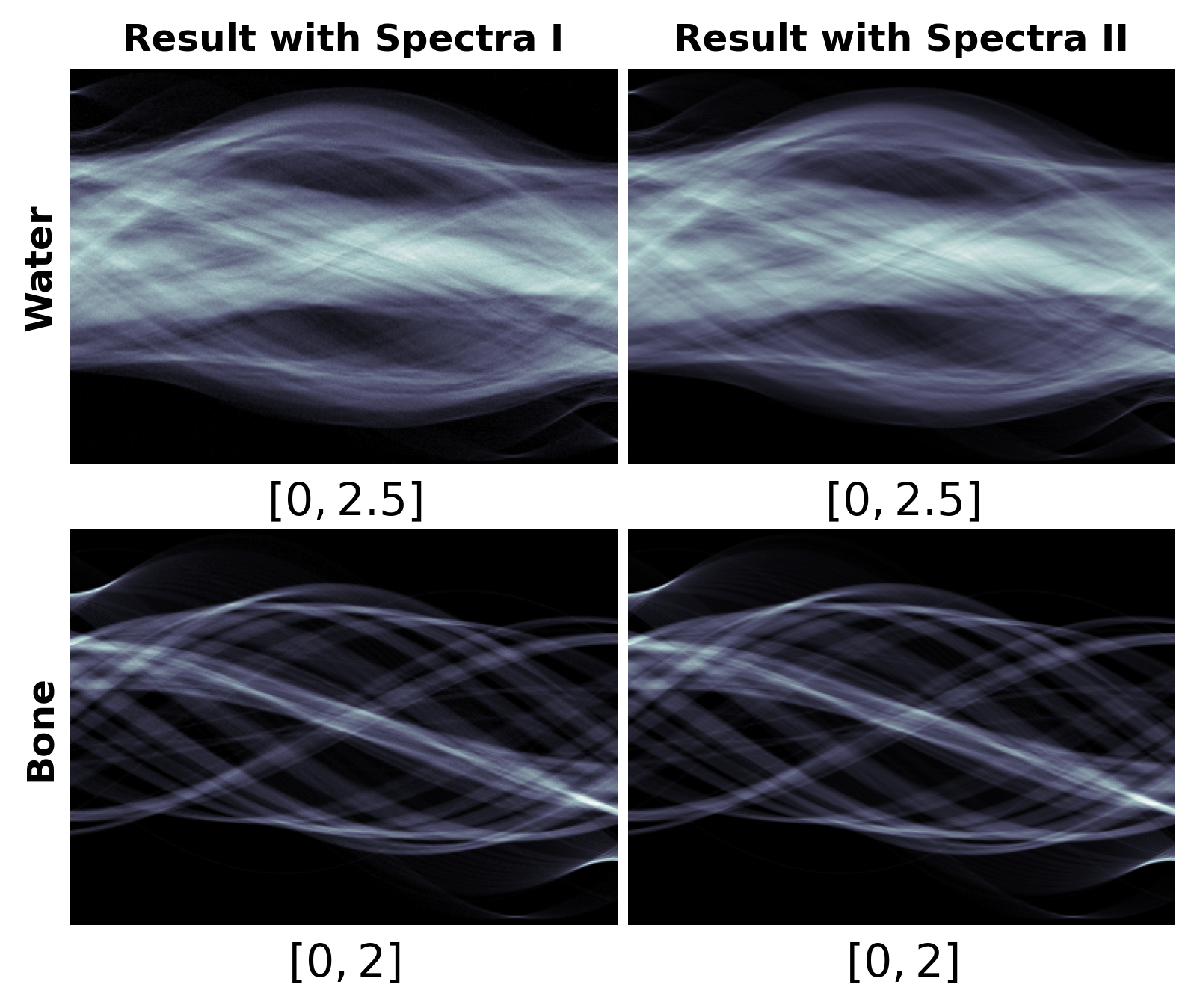}
      \caption{Estimated basis sinograms of water (row 1) and bone (row 2) of the torso phantom from noisy data. Columns 1 and 2 are obtained with spectral pairs in \cref{fig:normalized_spectra}{\color{red}a} and \cref{fig:normalized_spectra}{\color{red}b}, respectively. The corresponding truth basis sinograms are displayed in column 1 of \cref{fig:sinogram_torso}.} 
    \label{fig:sinogram_torso_noisy}
\end{figure}

\begin{figure}[htbp]
    \centering
    \includegraphics[width=1.\textwidth]{./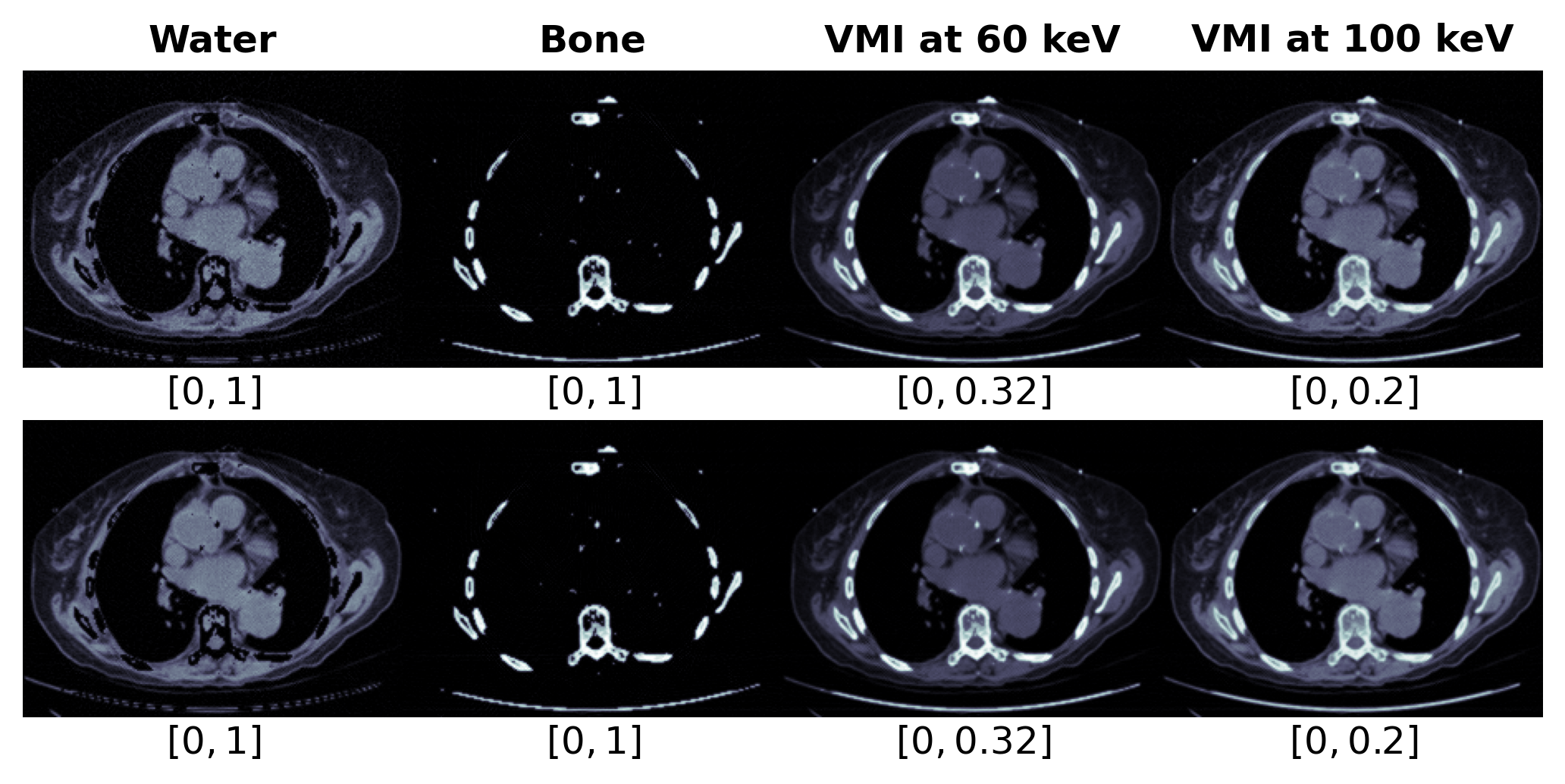}
     \caption{Basis images of water (column 1) and bone (column 2) and VMIs at energies 60 keV (column 3) and 100 keV (column 4) reconstructed from noisy data, respectively, with the spectral pairs in \cref{fig:normalized_spectra}{\color{red}a}  (row 1) and \cref{fig:normalized_spectra}{\color{red}b} (row 2) of the torso phantom, respectively.} 
    \label{fig:recon_torso_noisy}
\end{figure}

\section{Discussion and Conclusion}\label{sec:discussion_conclusion}

The two-step DDD method is used widely for reconstruction of quantitative images in geometrically-consistent MSCT.  We investigated the existence, uniqueness, and stability of the solution to DD-data model \cref{eq:nonlinear_systems_original} or \cref{eq:nonlinear_systems_map} in geometrically-consistent MSCT. We have derived a sufficient condition that the nonlinear mapping in \eqref{eq:nonlinear_systems_map} 
is a local homeomorphism, and also a necessary condition that it is a proper mapping and then further a homeomorphism, where the homeomorphism implies the existence of the solution. 
In particular, for the DECT case, we demonstrate that the corresponding mapping is a proper mapping, which implies a sufficient condition on the solution existence. 
We also derived a sufficient condition on the global injectivity of the nonlinear mapping in \eqref{eq:nonlinear_systems_map}, which is equivalent to the uniqueness condition of the solution. 
Additionally, we identified some bounded regions for a specific stability estimate of the discrete model.

Furthermore, we conducted quantitative studies to demonstrate numerically the validity extent of proposed conditions. The results from the ordinary Newton method with noise-free, ideal data suggest that the truth basis sinograms can numerically accurately be recovered. We also demonstrate numerically the solution stability by using noisy data.

To the best of our knowledge, this is the first work that investigates the specific and explicit conditions for the existence, uniqueness, and stability of the discrete basis sinogram to the DD-data model \cref{eq:nonlinear_systems_original} in 
geometrically-consistent MSCT. The conditions discussed depend on the distributions of the used energy spectra  and expansion coefficients of basis images. 
With spectra and basis material attenuation coefficients of practical relevance in diagnositic DECT, one can readily validate the existence, uniqueness, and stability of the solution to the DD-data model in advance without knowing a specific solution.

As we can see from \cref{sec:dd_model}, DD-data model \cref{eq:nonlinear_systems_original} or \cref{eq:nonlinear_systems_map} is independent of the specific discretization schemes 
of the image and data spaces. Hence, the solution analysis in the work can readily be applied to other discretization forms. 
These characteristics may imply that the proposed conditions have a broad application prospect in 
the general physical case for practical MSCT. The theoretical and numerical studies of solution may also provide insights into the design of one-step algorithms for solving directly the DD-data model.

\clearpage                                        
\appendix
\section{Supplementary data}
\label{appendix:data}

We present specific spectra pairs and MACs used in numerical studies, which are referred to as $S$ and $B$ defined in \cref{eq:jacobian_matrix}. The first and second columns of $S_1^{\tra}$ are low-kV and high-kV spectra shown in  \cref{fig:normalized_spectra}{\color{red}a}, whereas the first and second columns of $S_2^{\tra}$ are low-kV and high-kV spectra shown in  \cref{fig:normalized_spectra}{\color{red}b}. 
\begin{equation*}
    S_{1}^{\tra} = \begin{bmatrix}
    6.07397e-09&1.33388e-09\\
    7.10972e-02&2.69644e-02\\
    2.75239e-01&1.65195e-01\\
    2.75638e-01&1.98221e-01\\
    1.99787e-01&1.69333e-01\\
    1.23729e-01&1.56827e-01\\
    5.45098e-02&7.83202e-02\\
    0.00000e+00&6.44024e-02\\
    0.00000e+00&5.06969e-02\\
    0.00000e+00&3.78745e-02\\
    0.00000e+00&2.66297e-02\\
    0.00000e+00&1.69125e-02\\
    0.00000e+00&8.18109e-03\\
    0.00000e+00&4.43064e-04
    \end{bmatrix},
    S_{2}^{\tra} = \begin{bmatrix}
    6.07397e-09&0.00000e+00\\
    7.10972e-02&0.00000e+00\\
    2.75239e-01&6.63807e-05\\
    2.75638e-01&1.42761e-02\\
    1.99787e-01&8.19111e-02\\
    1.23729e-01&1.77453e-01\\
    5.45098e-02&1.38680e-01\\
    0.00000e+00&1.49775e-01\\
    0.00000e+00&1.40374e-01\\
    0.00000e+00&1.17542e-01\\
    0.00000e+00&8.91056e-02\\
    0.00000e+00&5.94650e-02\\
    0.00000e+00&2.97105e-02\\
    0.00000e+00&1.64171e-03
    \end{bmatrix}.
\end{equation*}
The first and second columns of matrix $B^{\tra}$ are the MACs of water and bone materials, respectively. 
    \begin{equation*}
    B^{\tra} = \begin{bmatrix}
    4.76251e+00&2.55327e+01\\
    7.75665e-01&3.78214e+00\\
    3.64996e-01&1.26821e+00\\
    2.65875e-01&6.50013e-01\\
    2.25389e-01&4.16057e-01\\
    2.05162e-01&3.11231e-01\\
    1.92592e-01&2.55515e-01\\
    1.83292e-01&2.21535e-01\\
    1.76097e-01&1.99266e-01\\
    1.70448e-01&1.84934e-01\\
    1.65911e-01&1.76111e-01\\
    1.62055e-01&1.70366e-01\\
    1.58449e-01&1.65270e-01\\
    1.55060e-01&1.59231e-01
    \end{bmatrix}.
    \end{equation*}
\clearpage    
\bibliographystyle{plain}
\bibliography{MCTreferences}

\end{document}